\documentclass[3p,11pt]{elsarticle}
\usepackage{amssymb}
\usepackage{amsmath}
\usepackage{amsthm}

\DeclareMathAlphabet{\mathcal}{OMS}{cmsy}{m}{n}

\makeatletter
\def\ps@pprintTitle{%
 \let\@oddhead\@empty
 \let\@evenhead\@empty
 \def\@oddfoot{\centerline{\thepage}}%
 \let\@evenfoot\@oddfoot}
\makeatother

\newcommand{\bbC}{\mathbb{C}}
\newcommand{\bbF}{\mathbb{F}}
\newcommand{\bbH}{\mathbb{H}}

\newcommand{\bbR}{\mathbb{R}}
\newcommand{\bbT}{\mathbb{T}}
\newcommand{\bbZ}{\mathbb{Z}}

\newcommand{\bfA}{\mathbf{A}}
\newcommand{\bfB}{\mathbf{B}}
\newcommand{\bfc}{\mathbf{c}}
\newcommand{\bfC}{\mathbf{C}}

\newcommand{\bfG}{\mathbf{G}}

\newcommand{\bfI}{\mathbf{I}}
\newcommand{\bfJ}{\mathbf{J}}
\newcommand{\bfP}{\mathbf{P}}

\newcommand{\bfx}{\mathbf{x}}

\newcommand{\bfy}{\mathbf{y}}

\newcommand{\bfz}{\mathbf{z}}

\newcommand{\bfone}{\boldsymbol{1}}
\newcommand{\bfzero}{\boldsymbol{0}}
\newcommand{\bfdelta}{\boldsymbol{\delta}}
\newcommand{\bfDelta}{\boldsymbol{\Delta}}

\newcommand{\bfGamma}{\boldsymbol{\Gamma}}
\newcommand{\bfchi}{\boldsymbol{\chi}}
\newcommand{\bfphi}{\boldsymbol{\varphi}}
\newcommand{\bfPhi}{\boldsymbol{\Phi}}
\newcommand{\bfpsi}{\boldsymbol{\psi}}
\newcommand{\bfPsi}{\boldsymbol{\Psi}}
\newcommand{\bfTheta}{\boldsymbol{\Theta}}

\newcommand{\calB}{\mathcal{B}}
\newcommand{\calD}{\mathcal{D}}
\newcommand{\calE}{\mathcal{E}}
\newcommand{\calG}{\mathcal{G}}

\newcommand{\calN}{\mathcal{N}}
\newcommand{\calR}{\mathcal{R}}

\newcommand{\calU}{\mathcal{U}}
\newcommand{\calV}{\mathcal{V}}

\newcommand{\calX}{\mathcal{X}}
\newcommand{\calY}{\mathcal{Y}}

\newcommand{\rmB}{\mathrm{B}}
\newcommand{\rmc}{\mathrm{c}}
\newcommand{\rmC}{\mathrm{C}}
\newcommand{\rmd}{\mathrm{d}}

\newcommand{\rmQ}{\mathrm{Q}}
\newcommand{\rmT}{\mathrm{T}}

\newcommand{\Tr}{\operatorname{Tr}}

\newcommand{\sgn}{{\operatorname{sgn}}}
\newcommand{\dist}{\operatorname{dist}}
\newcommand{\rank}{\operatorname{rank}}
\newcommand{\BIBD}{{\operatorname{BIBD}}}
\newcommand{\Span}{\operatorname{span}}

\newcommand{\ETF}{{\operatorname{ETF}}}
\newcommand{\ECTFF}{{\operatorname{ECTFF}}}
\newcommand{\EITFF}{{\operatorname{EITFF}}}

\newcommand{\Fro}{\mathrm{Fro}}
\newcommand{\op}{\mathrm{op}}

\newcommand{\abs}[1]{|{#1}|}

\newcommand{\Bigparen}[1]{\Bigl({#1}\Bigr)}
\newcommand{\biggparen}[1]{\biggl({#1}\biggr)}

\newcommand{\bigbracket}[1]{\bigl[{#1}\bigr]}
\newcommand{\Bigbracket}[1]{\Bigl[{#1}\Bigr]}

\newcommand{\set}[1]{\{{#1}\}}
\newcommand{\bigset}[1]{\bigl\{{#1}\bigr\}}

\newcommand{\norm}[1]{\|{#1}\|}

\newcommand{\ip}[2]{\langle{#1},{#2}\rangle}

\setlength{\arraycolsep}{2pt}

\newtheorem{theorem}{Theorem}[section]
\newtheorem{lemma}[theorem]{Lemma}

\theoremstyle{definition}
\newtheorem{definition}[theorem]{Definition}
\newtheorem{example}[theorem]{Example}
\newtheorem{remark}[theorem]{Remark}

\begin{document}
\begin{frontmatter}
\title{Grassmannian codes from paired difference sets}

\author[AFIT]{Matthew Fickus}
\ead{Matthew.Fickus@gmail.com}
\author[IA]{Joseph W.\ Iverson}
\author[SDSU]{John Jasper}
\author[CSU]{Emily J.\ King}

\address[AFIT]{Department of Mathematics and Statistics, Air Force Institute of Technology, Wright-Patterson AFB, OH 45433}
\address[IA]{Department of Mathematics, Iowa State University, Ames, IA 50011}
\address[SDSU]{Department of Mathematics and Statistics, South Dakota State University, Brookings, SD 57007}
\address[CSU]{Department of Mathematics, Colorado State University, Fort Collins, CO 80523}

\begin{abstract}
An equiangular tight frame (ETF) is a sequence of vectors in a Hilbert space that achieves equality in the Welch bound and so has minimal coherence.
More generally,
an equichordal tight fusion frame (ECTFF) is a sequence of equi-dimensional subspaces of a Hilbert space that achieves equality in Conway, Hardin and Sloane's simplex bound.
Every ECTFF is a type of optimal Grassmannian code,
that is, an optimal packing of equi-dimensional subspaces of a Hilbert space.
We construct ECTFFs by exploiting new relationships between known ETFs.
Harmonic ETFs equate to difference sets for finite abelian groups.
We say that a difference set for such a group is ``paired" with a difference set for its Pontryagin dual when the corresponding subsequence of its harmonic ETF happens to be an ETF for its span.
We show that every such pair yields an ECTFF.
We moreover construct an infinite family of paired difference sets using quadratic forms over the field of two elements.
Together this yields two infinite families of real ECTFFs.
\end{abstract}

\begin{keyword}
equiangular tight frame \sep difference set \sep quadratic form \sep symplectic form \MSC[2020] 42C15
\end{keyword}
\end{frontmatter}

\section{Introduction}

The \textit{chordal distance} between two $R$-dimensional subspaces $\calU_1$ and $\calU_2$ of a $D$-dimensional real or complex Hilbert space $\bbH$ is
$\dist(\calU_1,\calU_2)
:=2^{-\frac12}\norm{\bfP_1-\bfP_2}_\Fro
=[R-\Tr(\bfP_1\bfP_2)]^{\frac12}$
where $\bfP_1$ and $\bfP_2$ are their respective rank-$R$ orthogonal projection operators.
Conway, Hardin and Sloane~\cite{ConwayHS96} showed that the minimum pairwise chordal distance between the members of any sequence $\set{\calU_n}_{n=1}^N$ of $R$-dimensional subspaces of $\bbH$ satisfies the \textit{simplex bound}:
\begin{equation}
\label{eq.simplex bound}
\smash{\min_{n_1\neq n_2}\dist(\calU_{n_1},\calU_{n_2})
\leq\bigbracket{\tfrac{R(D-R)}{D}\tfrac{N}{N-1}}^{\frac12}.}
\end{equation}
In modern parlance~\cite{KutyniokPCL09},
they further showed that such a sequence $\set{\calU_n}_{n=1}^N$ achieves equality in~\eqref{eq.simplex bound} if and only if it is an \textit{equichordal tight fusion frame} (ECTFF) for $\bbH$,
namely when $\dist(\calU_{n_1},\calU_{n_2})$ is constant over all $n_1\neq n_2$ (equichordality) and $\sum_{n=1}^N\bfP_n=A\bfI$ for some $A>0$ (tightness).
When such an ECTFF for $\bbH$ exists it is thus an optimal \textit{Grassmannian code}, that is,
an optimal packing (with respect to the chordal distance) of $N$ points on the \textit{Grassmannian} (space) that consists of all $R$-dimensional subspaces of the $D$-dimensional space $\bbH$.
When $R=1$ the simplex bound~\eqref{eq.simplex bound} reduces to the \textit{Welch bound}~\cite{Welch74,StrohmerH03} on the \textit{coherence} of $N$ nonzero vectors $\set{\bfphi_n}_{n=1}^{N}$ in $\bbH$:
\begin{equation}
\label{eq.Welch bound}
\max_{n_1\neq n_2}
\tfrac{\abs{\ip{\bfphi_{n_1}}{\bfphi_{n_2}}}}{\norm{\bfphi_{n_1}}\norm{\bfphi_{n_2}}}
\geq\bigbracket{\tfrac{N-D}{D(N-1)}}^{\frac12}.
\end{equation}
In this case, an ECTFF for $\bbH$ equates to an \textit{equiangular tight frame} (ETF) for $\bbH$, namely to a sequence~$\set{\bfphi_n}_{n=1}^{N}$ of nonzero equal-norm vectors in $\bbH$ that achieves equality in~\eqref{eq.Welch bound}.
More generally an ECTFF for $\bbH$ will have minimal \textit{block coherence} $\max_{n_1\neq n_2}\norm{\bfP_{n_1}\bfP_{n_2}}_\op$ if its subspaces are \textit{equi-isoclinic}~\cite{LemmensS73b},
that is, satisfy $\bfP_{n_1}\bfP_{n_2}\bfP_{n_1}=\sigma^2\bfP_{n_1}$ for some $\sigma\geq0$ and all $n_1\neq n_2$~\cite{DhillonHST08}.
Such an ECTFF is called an \textit{equi-isoclinic tight fusion frame} (EITFF) for $\bbH$.

ETFs, ECTFFs and EITFFs arise in various applications,
including compressed sensing~\cite{EldarKB10,BajwaCM12,BandeiraFMW13,CalderbankTX15},
quantum information theory~\cite{Zauner99,RenesBSC04},
wireless communication~\cite{StrohmerH03,Bodmann07},
and algebraic coding theory~\cite{JasperMF14}.
Much of the related literature is devoted to the \textit{existence problem}:
for what $D$, $N$ and $R$ does there exist an $\ECTFF(D,N,R)$, that is, an ECTFF for a $D$-dimensional Hilbert space that consists of $N$ subspaces of dimension $R$?
Moreover, in such cases, when can these subspaces be chosen to be equi-isoclinic and/or real?
Most positive existence results involve explicit construction from some type of combinatorial design.
See~\cite{FickusM16} for a survey of known $\ETF(D,N)$ (i.e., $\ECTFF(D,N,1)$).
Several constructions of $\ECTFF(D,N,R)$ with $R>1$ are known.
Some of these actually yield $\EITFF(D,N,R)$:
one can tensor an ETF with an orthonormal basis (ONB)~\cite{LemmensS73b,CalderbankTX15,King21},
or convert a complex and/or quaternionic ETF into an EITFF over a subfield~\cite{Hoggar77,EtTaoui20,Waldron20},
or exploit a complex \textit{conference} matrix~\cite{EtTaoui18,BlokhuisBE18}.
Other methods yield ECTFFs that are not necessarily equi-isoclinic,
including constructions from
\textit{quadratic residues}~\cite{CalderbankHRSS99,ZhangG18}
and their generalizations~\cite{KocakN17},
\textit{balanced incomplete block designs} (BIBDs)~\cite{Zauner99,ZhangG18},
\textit{$2$-transitive groups}~\cite{Creignou08},
\textit{semiregular divisible difference sets}~\cite{King16} and more generally \textit{difference families}~\cite{FickusMW21},
\textit{Latin squares}~\cite{ZhangG18},
and chains of alternating \textit{Naimark} and \textit{spatial complements}~\cite{CasazzaFMWZ11,FickusMW21}.
Other examples have been found numerically, and some of these have been perfected~\cite{ConwayHS96,DhillonHST08,CohnKM16,FuchsHS17}.
See~\cite{BachocBC04} for connections between ECTFFs and $t$-designs for Grassmannians, and~\cite{BachocE13} for various generalizations of ECTFFs.

Our work here is inspired by some ideas from the recent literature.
It turns out that some ETFs contain others:
if $\set{\bfphi_n}_{n=1}^N$ is any ETF for $\bbH$ then any subsequence of it is equiangular and might, on rare occasion, be a tight frame for the subspace of $\bbH$ that it spans.
See~\cite{FickusMJ16,ApplebyBDF17,FickusJKM18} for instances of this phenomenon.
Moreover, such \textit{sub-ETFs} can yield ECTFFs.
For example,
when an ETF partitions into regular simplices their respective spans form an ECTFF~\cite{FickusJKM18}.
This applies to \textit{Steiner} ETFs~\cite{GoethalsS70,FickusMT12},
certain \textit{polyphase} ETFs~\cite{FickusJMPW19} as well as to several infinite families of \textit{harmonic ETFs}~\cite{FickusJKM18,FickusS20},
namely ETFs that arise by restricting the characters of a finite abelian group to a \textit{difference set}~\cite{Konig99,StrohmerH03,XiaZG05,DingF07}.

In this paper we construct ECTFFs by exploiting new relationships between known ETFs.
In the next section we review some known concepts and results that we will need later on.
In Section~3, we define when a difference set for a finite abelian group is \textit{paired} with a difference set for its Pontryagin dual (Definition~\ref{def.paired difference sets}).
We show that a harmonic ETF that arises from such a pair contains many overlapping, unitarily equivalent copies of a smaller ETF, and moreover that the spans of these copies form an ECTFF (Theorem~\ref{thm.ECTFF from PDS}).
In Section~4 we exploit quadratic forms over $\bbF_2$ to construct an infinite family of paired difference sets (Theorem~\ref{thm.infinite family}).
For every integer $M\geq 2$ this yields an $\ETF(2^{M-1}(2^M\pm 1),2^{2M})$ that contains many copies of an $\ETF(\frac13(2^{2M}-1),2^{M-1}(2^M\mp 1))$.
These ETFs are not new: they equate to known families of \textit{strongly regular graphs}~\cite{Brouwer07,Brouwer17} via the correspondences of~\cite{HolmesP04,Waldron09,BargGOY15,FickusJMPW18}.
Moreover, their parameters match those of certain known real Steiner and Tremain~\cite{FickusJMP18} ETFs (or their Naimark complements).
That said, the resulting real $\ECTFF(2^{M-1}(2^M\pm 1),2^{2M},\frac13(2^{2M}-1))$ seem to be new except in the $(D,N,R)=(6,16,5)$ case.
They are not equi-isoclinic.
We conclude in Section~5 with some open problems concerning the existence of paired difference sets.

\section{Preliminaries}

Let $\calN$ be a finite set of cardinality $N>1$,
and let $\bbH$ be a Hilbert space over $\bbF$ (either $\bbR$ or $\bbC$) of dimension $D\geq 1$.
A sequence $\set{\calU_n}_{n\in\calN}$ of $R$-dimensional subspaces of $\bbH$ is a \textit{tight fusion frame} (TFF) for $\bbH$ if their projections $\set{\bfP_n}_{n\in\calN}$ satisfy $\sum_{n\in\calN}\bfP_n=A\bfI$ for some $A>0$.
This requires $A=\frac{NR}{D}\geq 1$ since $NR=\sum_{n\in\calN}\Tr(\bfP_n)=\Tr(\sum_{n\in\calN}\bfP_n)=\Tr(A\bfI)=AD$ and $NR=\sum_{n\in\calN}\rank(\bfP_n)\geq\rank(A\bfI)=D$.
Thus, for any $R$-dimensional subspaces $\set{\calU_n}_{n\in\calN}$ of $\bbH$,
\begin{equation}
\label{eq.ECTFF derivation 1}
0\leq\tfrac1{N(N-1)}\Tr\Bigbracket{\Bigparen{\sum_{n\in\calN}\bfP_n-\tfrac{NR}{D}\bfI}^2}
=\tfrac1{N(N-1)}\sum_{n_1\in\calN}\sum_{n_2\neq n_1}
\Tr(\bfP_{n_1}\bfP_{n_2})-\tfrac{R(NR-D)}{D(N-1)},
\end{equation}
where equality holds if and only if $\set{\calU_n}_{n\in\calN}$ is a TFF for $\bbH$.
Each term $\Tr(\bfP_{n_1}\bfP_{n_2})$ is real since
\begin{equation*}
[\dist(\calU_{n_1},\calU_{n_2})]^2
=\tfrac12\norm{\bfP_{n_1}-\bfP_{n_2}}_\Fro^2
=\tfrac12\Tr[(\bfP_{n_1}-\bfP_{n_2})^2]
=R-\Tr(\bfP_{n_1}\bfP_{n_2}).
\end{equation*}
As such, we can rearrange and continue~\eqref{eq.ECTFF derivation 1} as
\begin{equation}
\label{eq.generalized Welch}
\tfrac{R(NR-D)}{D(N-1)}
\leq\tfrac1{N(N-1)}\sum_{n_1\in\calN}\sum_{n_2\neq n_1}
\Tr(\bfP_{n_1}\bfP_{n_2})
\leq \max_{n_1\neq n_2}\Tr(\bfP_{n_1}\bfP_{n_2})
=R-\min_{n_1\neq n_2}[\dist(\calU_{n_1},\calU_{n_2})]^2.
\end{equation}
Equality holds throughout~\eqref{eq.generalized Welch} if and only if $\set{\calU_n}_{n\in\calN}$ is an ECTFF for $\bbH$,
namely a TFF that is also \textit{equichordal} in the sense that
$\dist(\calU_{n_1},\calU_{n_2})$ is constant over all $n_1\neq n_2$.
Rearranging~\eqref{eq.generalized Welch} gives the simplex bound~\eqref{eq.simplex bound}, which is called this since $\set{\calU_n}_{n\in\calN}$ is an ECTFF for $\bbH$ if and only if $\set{\bfP_n-\frac{R}{D}\bfI}_{n\in\calN}$ is a regular simplex for its span in the real Hilbert space of traceless self-adjoint operators on $\bbH$, equipped with the Frobenius inner product~\cite{ConwayHS96}.
In particular,
an ECTFF can only exist if $N\leq\rmd_\bbF(D)+1$ where $\rmd_\bbF(D)$ is the dimension of this space, namely
\begin{equation*}
\rmd_\bbF(D)=\left\{\begin{array}{cl}
\frac12(D-1)(D+2),&\ \bbF=\bbR,\\
D^2-1,&\ \bbF=\bbC.
\end{array}\right.
\end{equation*}
This necessary condition on the existence of an ECTFF is often called \textit{Gerzon's bound}; see~\cite{CalderbankHRSS99,KocakN17,ZhangG18} for some examples of ECTFFs that achieve equality in it.
When it is violated, $\set{\bfP_n-\frac{R}{D}\bfI}_{n\in\calN}$ cannot be mutually obtuse,
meaning there exists $n_1\neq n_2$ such that
\begin{equation*}
0\leq\ip{\bfP_{n_1}-\tfrac{R}{D}\bfI}{\bfP_{n_2}-\tfrac{R}{D}\bfI}_\Fro
=\Tr(\bfP_{n_1}\bfP_{n_2})-\tfrac{R^2}{D}
=\tfrac{R(D-R)}{D}-[\dist(\calU_{n_1},\calU_{n_2})]^2,
\end{equation*}
implying the \textit{orthoplex bound} of~\cite{ConwayHS96}, namely that
$\min_{n_1\neq n_2}\dist(\calU_{n_1},\calU_{n_2})
\leq\bigbracket{\tfrac{R(D-R)}{D}}^{\frac12}$;
see~\cite{KocakN17} for some recent constructions of sequences of subspaces that achieve equality in it.

Since both equichordality and tightness are preserved by both unitary transformations on $\bbH$ and bijections on $\calN$,
the existence of an ECTFF depends only on the parameters $(D,N,R)$ and $\bbF$.
We refer to any ECTFF for a possibly-complex $D$-dimensional Hilbert space $\bbH$ that consists of $N$ subspaces of it, each of dimension $R$, as an ``$\ECTFF(D,N,R)$,"
and say it is \textit{real} when $\bbH$ can be chosen to be $\bbR^D$.
The \textit{spatial complement}~\cite{CasazzaFMWZ11} of an $\ECTFF(D,N,R)$ $\set{\calU_n}_{n\in\calN}$ for $\bbH$ with $R<D$ is the sequence $\set{\calU_n^\perp}_{n\in\calN}$ of its members' orthogonal complements.
It is an $\ECTFF(D,N,D-R)$ for $\bbH$ since $\sum_{n\in\calN}(\bfI-\bfP_n)=(N-\tfrac{NR}D)\bfI$ and
\begin{equation*}
\dist(\calU_{n_1}^\perp,\calU_{n_2}^\perp)
=\tfrac1{\sqrt{2}}\norm{(\bfI-\bfP_{n_1})-(\bfI-\bfP_{n_2})}_\Fro
=\tfrac1{\sqrt{2}}\norm{\bfP_{n_1}-\bfP_{n_2}}_\Fro
=\dist(\calU_{n_1},\calU_{n_2}),
\ \forall\,n_1\neq n_2.
\end{equation*}
It can also be shown that if $\set{\calU_n}_{n\in\calN}$ achieves equality in the orthoplex bound then $\set{\calU_n^\perp}_{n\in\calN}$ does as well~\cite{King21}.

\subsection{Grassmannian codes and finite frame theory}
Equip $\bbF^\calN:=\set{\bfx:\calN\rightarrow\bbF}$ with the inner product $\ip{\bfx_1}{\bfx_2}:=\sum_{n\in\calN}\overline{\bfx_1(n)}\bfx_2(n)$.
(Under this notation, for any positive integer $N$, ``$\bbF^N$" is shorthand for $\bbF^{[N]}$ where $[N]:=\set{n\in\bbZ: 1\leq n\leq N}$.
Throughout, our complex inner products are conjugate-linear in their first arguments.)
The \textit{synthesis operator} of a sequence $\set{\bfphi_n}_{n\in\calN}$ of vectors in $\bbH$ is $\bfPhi:\bbF^\calN\rightarrow\bbH$, $\bfPhi\bfx:=\sum_{n\in\calN}\bfx(n)\bfphi_n$.
Its adjoint is the corresponding \textit{analysis operator} $\bfPhi^*:\bbH\rightarrow\bbF^\calN$, $\bfPhi^*\bfy=\sum_{n\in\calN}\ip{\bfphi_n}{\bfy}\bfdelta_n$,
where $\set{\bfdelta_n}_{n\in\calN}$ is the standard basis for $\bbF^\calN$.
In particular, we can regard a single vector $\bfphi\in\bbH$
as the synthesis operator $\bfphi:\bbF\rightarrow\bbH$, $\bfphi(x):=x\bfphi$ whose adjoint $\bfphi^*:\bbH\rightarrow\bbF$, $\bfphi^*\bfy=\ip{\bfphi_n}{\bfy}$ is a linear functional.
Composing $\bfPhi$ and $\bfPhi^*$ gives the \textit{frame operator}  $\bfPhi\bfPhi^*:\bbH\rightarrow\bbH$, $\bfPhi\bfPhi^*=\sum_{n\in\calN}\bfphi_n^{}\bfphi_n^*$
and the $\calN\times\calN$ \textit{Gram matrix} $\bfPhi^*\bfPhi:\bbF^\calN\rightarrow\bbF^\calN$ whose $(n,n')$th entry is $(\bfPhi^*\bfPhi)(n,n')=\ip{\bfphi_n}{\bfphi_{n'}}$.
In the special case where $\bbH=\bbF^\calD=\set{\bfy:\calD\rightarrow\bbF}$ for some finite set $\calD$ of cardinality $D>0$,
$\bfPhi$ is the $\calD\times\calN$ matrix whose $n$th column is $\bfphi_n$,
$\bfPhi^*$ is its $\calN\times\calD$ conjugate transpose,
and $\bfPhi\bfPhi^*$ and $\bfPhi^*\bfPhi$ are their $\calD\times\calD$ and $\calN\times\calN$ products, respectively.
In general, any $\bbF$-valued positive semidefinite $\calN\times\calN$ matrix $\bfG$ factors as $\bfG=\bfPhi^*\bfPhi$ for some sequence $\set{\bfphi_n}_{n\in\calN}$ of vectors in a Hilbert space $\bbH$ over $\bbF$ of dimension $D=\rank(\bfG)$.
This space is only unique up to a unitary transformation.

A sequence $\set{\bfphi_n}_{n\in\calN}$ of vectors in $\bbH$ is an ($A$-)\textit{tight frame} for $\bbH$ if $\bfPhi\bfPhi^*=A\bfI$ for some $A>0$.
In this case, any $\bfy\in\bbH$ can be written as
$\bfy=\frac1A\bfPhi\bfPhi^*\bfy=\frac1A\sum_{n\in\calN}\ip{\bfphi_n}{\bfy}\bfphi_n$ and so $\bbH$ is necessarily $\Span\set{\bfphi_n}_{n\in\calN}=\bfPhi(\bbF^\calD)$.
More generally, $\set{\bfphi_n}_{n\in\calN}$ is an $A$-tight frame for its span when $\bfPhi\bfPhi^*\bfy=A\bfy$ for all $\bfy\in\bfPhi(\bbF^\calD)$, namely when $\bfPhi\bfPhi^*\bfPhi=A\bfPhi$.
This occurs if and only if $(\bfPhi^*\bfPhi)^2=A\bfPhi^*\bfPhi$
(since having the latter implies that the image of $\bfPhi(\bfPhi^*\bfPhi-A\bfI)$ is contained in both $\bfPhi(\bbF^\calD)$ and $\ker(\bfPhi^*)=[\bfPhi(\bbF^\calD)]^\perp$).
As such,
a nonzero self-adjoint $\calN\times\calN$ matrix $\bfG$ is the Gram matrix $\bfPhi^*\bfPhi$ of an $A$-tight frame $\set{\bfphi_n}_{n\in\calN}$ for its span if and only if $\frac1A\bfG$ is a projection.
In this case, letting $D$ be $\dim(\Span\set{\bfphi_n}_{n\in\calN})=\rank(\bfPhi)=\frac1A\Tr(\bfG)$ we have that $\bfI-\frac1A\bfPhi^*\bfPhi$ is a projection of rank $N-D$.
If $D<N$, there thus exists an $A$-tight frame $\set{\bfpsi_n}_{n\in\calN}$ for a space of dimension $N-D$ that is uniquely defined (up to unitary transformations) by having
\begin{equation}
\label{eq.Naimark}
\bfPsi^*\bfPsi=A\bfI-\bfPhi^*\bfPhi,
\quad\text{i.e.,}\quad
\ip{\bfpsi_{n_1}}{\bfpsi_{n_2}}
=\left\{\begin{array}{rl}
A-\norm{\bfphi_n}^2,&\ n_1=n_2,\\
-\ip{\bfphi_{n_1}}{\bfphi_{n_2}},&\ n_1\neq n_2.
\end{array}\right.
\end{equation}
Such tight frames $\set{\bfphi_n}_{n\in\calN}$ and $\set{\bfpsi_n}_{n\in\calN}$ are called \textit{Naimark complements} of each other.

Now again let $\set{\calU_n}_{n\in\calN}$ be any sequence of $R$-dimensional subspaces of a $D$-dimensional Hilbert space $\bbH$.
For each $n\in\calN$ let $\bfPhi_n:\bbF^\calR\rightarrow\bbH$ be the synthesis operator of an ONB $\set{\bfphi_{n,r}}_{r\in\calR}$ for $\calU_n$,
and so $\bfP_n=\bfPhi_n^{}\bfPhi_n^*$ where $\bfPhi_n^*\bfPhi_n^{}=\bfI$.
Here, $\set{\bfphi_{n,r}}_{r\in\calR}$ is only unique up to $\calR\times\calR$ unitaries.
That is, it can be any member of the fiber of the \textit{Stiefel manifold} that projects onto the point $\calU_n$ in the Grassmannian.
The frame operator of the concatenation $\set{\bfphi_{n,r}}_{(n,r)\in\calN\times\calR}$ of these bases is
$\sum_{n\in\calN}\sum_{r\in\calR}\bfphi_{n,r}^{}\bfphi_{n,r}^*
=\sum_{n\in\calN}\bfPhi_n^{}\bfPhi_n^*
=\sum_{n\in\calN}\bfP_n$.
In particular,
$\set{\bfphi_{n,r}}_{(n,r)\in\calN\times\calR}$ is a tight frame for $\bbH$ if and only if $\set{\calU_n}_{n\in\calN}$ is a TFF for $\bbH$.
Meanwhile, the Gram matrix of $\set{\bfphi_{n,r}}_{(n,r)\in\calN\times\calR}$ has
$\ip{\bfphi_{n_1,r_1}}{\bfphi_{n_2,r_2}}
=\ip{\bfPhi_{n_1}\bfdelta_{r_1}}{\bfPhi_{n_2}\bfdelta_{r_2}}
=(\bfPhi_{n_1}^*\bfPhi_{n_2}^{})(r_1,r_2)$ as its $((n_1,r_1),(n_2,r_2))$th entry,
and so is naturally regarded as an $\calN\times\calN$ array whose $(n_1,n_2)$th block is the $\calR\times\calR$ \textit{cross-Gram} matrix $\bfPhi_{n_1}^*\bfPhi_{n_2}^{}$.
Since $\norm{\bfPhi_{n_1}^*\bfPhi_{n_2}}_\op\leq\norm{\bfPhi_{n_1}}_\op\norm{\bfPhi_{n_2}}_\op=1$,
the singular values of this matrix can be written as $\set{\cos(\theta_{n_1,n_2,r})}_{r=1}^R$ for some nondecreasing sequence $\set{\theta_{n_1,n_2,r}}_{r=1}^R$ of \textit{principal angles} in $[0,\frac\pi 2]$.
From this perspective,
$\set{\calU_n}_{n\in\calN}$ is equichordal if and only if
$\Tr(\bfP_{n_1}\bfP_{n_2})
=\Tr(\bfPhi_{n_1}^{}\bfPhi_{n_1}^*\bfPhi_{n_2}^{}\bfPhi_{n_2}^*)
=\Tr(\bfPhi_{n_2}^*\bfPhi_{n_1}^{}\bfPhi_{n_1}^*\bfPhi_{n_2}^{})
=\norm{\bfPhi_{n_1}^*\bfPhi_{n_2}^{}}_\Fro^2
=\sum_{r=1}^R\cos^2(\theta_{n_1,n_2,r})$
is constant over all $n_1\neq n_2$.
This perspective also gives a way to continue~\eqref{eq.ECTFF derivation 1} in a way that differs from~\eqref{eq.generalized Welch}:
\begin{equation*}
\tfrac{NR-D}{D(N-1)}
\leq\tfrac1{NR(N-1)}\sum_{n_1\in\calN}\sum_{n_2\neq n_1}
\norm{\bfPhi_{n_1}^*\bfPhi_{n_2}^{}}_\Fro^2\\
\leq\tfrac1{N(N-1)}\sum_{n_1\in\calN}\sum_{n_2\neq n_1}
\norm{\bfPhi_{n_1}^*\bfPhi_{n_2}^{}}_2^2
\leq \max_{n_1\neq n_2}\norm{\bfPhi_{n_1}^*\bfPhi_{n_2}^{}}_2^2.
\end{equation*}
Here, equality holds throughout if and only if $\set{\bfphi_{n,r}}_{(n,r)\in\calN\times\calR}$ is a tight frame for $\bbH$ and $\theta_{n_1,n_2,r}$ is constant over all $n_1\neq n_2$ and $r$.
This occurs if and only if $\set{\calU_n}_{n\in\calN}$ is an EITFF for $\bbH$:
since $\bfPhi_{n_1}^*\bfPhi_{n_1}^{}=\bfI$,
$\bfPhi_{n_1}^*\bfPhi_{n_2}^{}$ is a unitary scaled by some $\sigma\geq0$ if and only if
$\bfP_{n_1}\bfP_{n_2}\bfP_{n_1}
=\bfPhi_{n_1}^{}\bfPhi_{n_1}^*\bfPhi_{n_2}^{}\bfPhi_{n_2}^*\bfPhi_{n_1}^{}\bfPhi_{n_1}^*$
equals
$\bfPhi_{n_1}^{}(\sigma^2\bfI)\bfPhi_{n_1}^*
=\sigma^2\bfP_{n_1}$.
In particular, every EITFF is an optimal packing of members of the Grassmannian with respect to the \textit{spectral distance}, defined as
\smash{$\dist_{\mathrm{s}}(\calU_{n_1},\calU_{n_2})
:=(1-\norm{\bfPhi_{n_1}^*\bfPhi_{n_2}^{}}_2^2)^{\frac12}$}~\cite{DhillonHST08}.

In the special case where $\set{\calU_n}_{n\in\calN}$ is a sequence of subspaces of $\bbH$ of dimension $R=1$ we have $\bfPhi_n=\norm{\bfphi_n}^{-1}\bfphi_n$ where $\bfphi_n$ is an arbitrary nonzero vector in $\calU_n$.
Here, each cross-Gram matrix $\bfPhi_{n_1}^*\bfPhi_{n_2}^{}$ is a $1\times 1$ matrix with entry $(\norm{\bfphi_{n_1}}\norm{\bfphi_{n_2}})^{-1}\ip{\bfphi_{n_1}}{\bfphi_{n_2}}$.
In this case,
both the above inequality and \eqref{eq.generalized Welch} reduce to the Welch bound~\eqref{eq.Welch bound}.
Assuming without loss of generality that $\set{\bfphi_n}_{n\in\calN}$ is equal-norm,
it achieves equality in this bound if and only if it is a tight frame for $\bbH$ that is also \textit{equiangular} in the sense that $\abs{\ip{\bfphi_{n_1}}{\bfphi_{n_2}}}$ is constant over all $n_1\neq n_2$.

If $\set{\bfpsi_{n,r}}_{(n,r)\in\calN\times\calR}$ is the Naimark complement~\eqref{eq.Naimark} of a concatenation $\set{\bfphi_{n,r}}_{(n,r)\in\calN\times\calR}$ of ONBs of the subspaces $\set{\calU_n}_{n\in\calN}$ of an $\ECTFF(D,N,R)$ with $D<NR$ then
$\set{\calV_n}_{n\in\calN}$, $\calV_n:=\Span\set{\bfpsi_{n,r}}_{r\in\calR}$
is an $\ECTFF(NR-D,N,R)$ (and is an EITFF if and only if $\set{\calU_n}_{n\in\calN}$ is as well).
Taking alternating Naimark and spatial complements~\cite{CasazzaFMWZ11} of an $\ECTFF(D,N,R)$ often leads to an infinite chain of mutually distinct ECTFFs.
We caution that the spatial complement of an $\EITFF(D,N,R)$ is itself an EITFF if and only if $D=2R$.
In general, letting $\set{\bfPhi_n}_{n\in\calN}$ and $\set{\bfTheta_n}_{n\in\calN}$ be synthesis operators for ONBs for an ECTFF $\set{\calU_n}_{n\in\calN}$ and its spatial complement $\set{\calU_n^\perp}_{n\in\calN}$, respectively, we have $\bfPhi_n^*\bfPhi_n^{}=\bfI$, $\bfTheta_n^*\bfTheta_n^{}=\bfI$ and
$\bfPhi_n^{}\bfPhi_n^*+\bfTheta_n^{}\bfTheta_n^*=\bfI$ for all $n$, and so
\begin{align*}
\bfI-(\bfPhi_{n_1}^*\bfPhi_{n_2}^{})(\bfPhi_{n_1}^*\bfPhi_{n_2}^{})^*
&=\bfI-\bfPhi_{n_1}^*(\bfI-\bfTheta_{n_2}^{}\bfTheta_{n_2}^*)\bfPhi_{n_1}^{}
=(\bfPhi_{n_1}^*\bfTheta_{n_2}^{})(\bfPhi_{n_1}^*\bfTheta_{n_2}^{})^*,\\
\bfI-(\bfTheta_{n_1}^*\bfTheta_{n_2}^{})^*(\bfTheta_{n_1}^*\bfTheta_{n_2}^{})
&=\bfI-\bfTheta_{n_2}^*(\bfI-\bfPhi_{n_1}^{}\bfPhi_{n_1}^*)\bfTheta_{n_2}^{}
=(\bfPhi_{n_1}^*\bfTheta_{n_2}^{})^*(\bfPhi_{n_1}^*\bfTheta_{n_2}^{}).
\end{align*}
This implies that the sequences of singular values of $\bfPhi_{n_1}^*\bfPhi_{n_2}^{}$ and $\bfTheta_{n_1}^*\bfTheta_{n_2}^{}$ are $1$-padded versions of each other, in general.
In particular, any $\ECTFF(D,N,R)$ with $\frac D2<R<D$ is not an EITFF since some but not all of the principal angles between any two of its subspaces are $0$.

\subsection{Harmonic equiangular tight frames}

A \textit{character} of a finite abelian group $\calG$ is a homomorphism $\gamma:\calG\rightarrow\bbT:=\set{z\in\bbC: \abs{z}=1}$.
The set $\hat{\calG}$ of all characters of $\calG$ is called the \textit{Pontryagin dual} of $\calG$, and is itself a group under pointwise multiplication.
In this finite setting, it is well known that $\hat{\calG}$ is isomorphic to $\calG$ and that its members form an equal-norm orthogonal basis for $\bbC^\calG$.
The synthesis operator \smash{$\bfGamma:\bbC^{\hat{\calG}}\rightarrow\bbC^\calG$} of the sequence \smash{$\set{\gamma}_{\gamma\in\hat{\calG}}$} of all characters of $\calG$ (each serving as its own index) is thus a square \smash{$\calG\times\hat{\calG}$} matrix that satisfies $\bfGamma^*=N\bfGamma^{-1}$ where $N:=\#(\calG)$.
It is the \textit{character table} of $\calG$,
having $(g,\gamma)$th entry $\bfGamma(g,\gamma)=\gamma(g)$.
Its adjoint (conjugate-transpose) $\bfGamma^*$ is the analysis operator of the characters, namely the \textit{discrete Fourier transform} (DFT) over $\calG$.
We identify $\calG$ with the Pontryagin dual of $\hat{\calG}$ via the isomorphism $g\mapsto(\gamma\mapsto\gamma(g))$.
That is, we define $g(\gamma):=\gamma(g)$,
meaning the $\hat{\calG}\times\calG$ character table of $\hat{\calG}$ is simply the (nonconjugate) transpose of $\bfGamma$.

A \textit{harmonic} frame over $\calG$ is one obtained by restricting the characters of $\calG$ to some nonempty subset $\calD$ of $\calG$,
namely \smash{$\set{\bfphi_\gamma}_{\gamma\in\hat{\calG}}\subseteq\bbC^\calD$}, $\bfphi_\gamma(d):=\gamma(d)$.
It is a tight frame for $\bbC^\calD$ since its synthesis operator $\bfPhi$ satisfies
$(\bfPhi\bfPhi^*)(d_1,d_2)=(\bfGamma\bfGamma^*)(d_1,d_2)=N\bfI(d_1,d_2)$ for all $d_1,d_2\in\calD$.
It is also equal norm since $\norm{\bfphi_\gamma}^2=\sum_{d\in\calD}\abs{\gamma(d)}^2=D:=\#(\calD)$ for all $\gamma\in\hat{\calG}$.
Its Gram matrix is $\hat{\calG}$-circulant,
having entries arising from the DFT of the characteristic function $\bfchi_\calD$ of $\calD$:
\begin{equation*}
(\bfPhi^*\bfPhi)(\gamma_1,\gamma_2)
=\ip{\bfphi_{\gamma_1}}{\bfphi_{\gamma_2}}
=\sum_{g\in\calD}\overline{\gamma_1(g)}\gamma_2(g)
=\sum_{g\in\calG}\overline{(\gamma_1^{}\gamma_2^{-1})(g)}\bfchi_\calD(g)
=(\bfGamma^*\bfchi_\calD)(\gamma_1^{}\gamma_2^{-1}),
\end{equation*}
for any $\gamma_1,\gamma_2\in\hat{\calG}$.
To compute just the magnitudes of these entries,
we exploit the way in which the DFT interacts with the \textit{convolution} $\bfx_1*\bfx_2\in\bbC^\calG$, \smash{$(\bfx_1*\bfx_2)(g):=\sum_{g'\in\calG}\bfx_1(g')\bfx_2(g-g')$} and \textit{involution} $\tilde{\bfx}_1\in\bbC^\calG$,
\smash{$\tilde{\bfx}_1(g):=\overline{\bfx_1(-g)}$} of any given $\bfx_1,\bfx_2\in\bbC^\calG$.
(In this general setting, we typically use additive notation on $\calG$ and multiplicative notation on $\hat{\calG}$.)
Specifically, for any $\gamma\in\hat{\calG}$ we have
$[\bfGamma^*(\bfx_1*\bfx_2)](\gamma)=\bfx_1(\gamma)\bfx_2(\gamma)$
and $(\bfGamma^*\tilde{\bfx}_1)(\gamma)=\overline{(\bfGamma^*\bfx_1)(\gamma)}$.
Thus, for any $\gamma_1,\gamma_2\in\hat{\calG}$,
\begin{equation}
\label{eq.difference set derivation 1}
\abs{(\bfPhi^*\bfPhi)(\gamma_1,\gamma_2)}^2
=\abs{\ip{\bfphi_{\gamma_1}}{\bfphi_{\gamma_2}}}^2
=\abs{(\bfGamma^*\bfchi_\calD)(\gamma_1^{}\gamma_2^{-1})}^2
=[\bfGamma^*(\bfchi_\calD*\tilde{\bfchi}_\calD)](\gamma_1^{}\gamma_2^{-1}),
\end{equation}
where $\bfchi_\calD*\tilde{\bfchi}_\calD$ is the \textit{autocorrelation} function of $\bfchi_\calD$.
For any $g\in\calG$,
the mapping $g_1\mapsto(g_1,g_1-g)$ is a bijection from $\calD\cap(g+\calD)$ onto $\set{(g_1,g_2)\in\calD\times\calD: g=g_1-g_2}$,
meaning
\begin{equation*}
(\bfchi_\calD*\tilde{\bfchi}_\calD)(g)
=\sum_{g'\in\calG}\bfchi_\calD(g')\bfchi_{g+\calD}(g')
=\#[\calD\cap(g+\calD)]
=\#\set{(g_1,g_2)\in\calD\times\calD: g=g_1-g_2}
\end{equation*}
is both the number of elements of $\calG$ that $\calD$ has in common with $g+\calD$ and the number of distinct ways that $g$ can be written as a difference of members of $\calD$.

Now consider the special case where $\calD$ is a \textit{difference set} for $\calG$,
namely when $\calG\neq\set{0}$ and there exists $\Lambda$ such that $(\bfchi_\calD*\tilde{\bfchi}_\calD)(g)=\Lambda$ for all $g\neq0$.
Since $(\bfchi_\calD*\tilde{\bfchi}_\calD)(0)=D$,
this occurs if and only if $\bfchi_\calD*\tilde{\bfchi}_\calD=(D-\Lambda)\bfdelta_0+\Lambda\bfchi_\calG$.
Taking DFTs equivalently gives $\abs{(\bfGamma^*\bfchi_\calD)(\gamma)}^2=(D-\Lambda)+\Lambda N\bfdelta_1(\gamma)$ for all $\gamma\in\hat{\calG}$.
Here,
evaluating at $\gamma=1$ gives $D^2=(D-\Lambda)+\Lambda N$,
and so $\Lambda$ is necessarily \smash{$\frac{D(D-1)}{N-1}$},
a fact that also follows from a simple counting argument.
That is, $\calD$ is a difference set of $\calG$ if and only if
$\abs{(\bfGamma^*\bfchi_\calD)(\gamma)}^2=D-\tfrac{D(D-1)}{N-1}=\tfrac{D(N-D)}{N-1}$ for all $\gamma\neq1$.
When combined with~\eqref{eq.difference set derivation 1},
this classical characterization~\cite{Turyn65} of difference sets yields the more recent observation~\cite{Konig99,StrohmerH03,XiaZG05,DingF07} that a nonempty subset $\calD$ of $\calG$ is a difference set for $\calG$ if and only if the corresponding harmonic frame \smash{$\set{\bfphi_\gamma}_{\gamma\in\hat{\calG}}$} is an ETF for $\bbC^\calD$,
since it equates to having
\begin{equation*}
\tfrac{\abs{\ip{\bfphi_{\gamma_1}}{\bfphi_{\gamma_2}}}}{\norm{\bfphi_{\gamma_1}}\norm{\bfphi_{\gamma_2}}}
=\tfrac1D\abs{(\bfGamma^*\bfchi_\calD)(\gamma_1^{}\gamma_2^{-1})}
=\bigbracket{\tfrac{N-D}{D(N-1)}}^{\frac12},
\quad\forall\ \gamma_1\neq\gamma_2,
\end{equation*}
namely to achieving equality in the Welch bound~\eqref{eq.Welch bound}.
If $\calD$ is any difference set for $\calG$ then $\calD^\rmc$ is as well;
provided $\calD$ is not $\emptyset$ or $\calG$, the fact that $\bfGamma$ has equal-norm orthogonal columns implies that the two resulting harmonic ETFs are Naimark complements of each other.

\section{Equichordal tight fusion frames from paired difference sets}

As discussed in the previous section,
an $\ECTFF(D,N,R)$ for a $D$-dimensional space $\bbH$ equates to an $NR$-vector tight frame for $\bbH$ that can be partitioned into $N$ orthonormal subsequences whose $R\times R$ cross-Gram matrices have a common Frobenius norm,
and moreover, is an EITFF for $\bbH$ precisely when these cross-Gram matrices are a common scalar multiple of some unitaries.
This Stiefel-based perspective of Grassmannian codes pervades the literature:
\cite{Zauner99,King16} construct ECTFFs from collections of orthonormal vectors whose cross-Gram matrices are either remarkably sparse or flat (and so have readily computed Frobenius norms),
whereas~\cite{LemmensS73b,CalderbankTX15,Hoggar77,EtTaoui20,Waldron20,EtTaoui18,BlokhuisBE18} construct EITFFs by converting each off-diagonal entry of a suitably nice $\calN\times\calN$ matrix into an $\calR\times\calR$ scaled unitary,
all while simultaneously ensuring tightness.

That said, echoing a common theme of frame theory,
it is sometimes easier to find nice tight frames for the subspaces of an ECTFF than it is to find nice ONBs for them.
For example, when an ETF partitions into regular simplices,
their spans naturally form an ECTFF~\cite{FickusJKM18}.
To be fair, some of these ECTFFs are more easily constructed from other methods:
those arising from Steiner ETFs (including McFarland-harmonic ETFs~\cite{JasperMF14}) also arise directly from their underlying BIBDs~\cite{Zauner99},
while those arising from Singer-complement-harmonic ETFs are actually ETF-tensor-ONB-type EITFFs~\cite{FickusS20}.
Nevertheless, some of these ECTFFs have not been explained by competing methods,
including some arising from polyphase $\ETF(q+1,q^3+1)$ and twin-prime-power-complement-harmonic ETFs~\cite{FickusJKM18}.
One downside to such an approach is that it can become more difficult to characterize when a resulting ECTFF is actually an EITFF~\cite{FickusS20}.

In this paper we carry this idea further,
constructing ECTFFs from many overlapping sub-ETFs of a single ETF.
More precisely, we use a harmonic ETF that contains a sub-ETF whose members are themselves indexed by the elements of a difference set:

\begin{definition}
\label{def.paired difference sets}
We say a difference set $\calD$ for a finite abelian group $\calG$ is \textit{paired} with a difference set $\calE$ for its Pontryagin dual $\hat{\calG}$ if
$\set{\bfphi_\varepsilon}_{\varepsilon\in\calE}\subseteq\bbC^\calD$,
$\bfphi_\varepsilon(d):=\varepsilon(d)$ is a tight frame for its span.
\end{definition}

This concept was briefly discussed in~\cite{FickusMJ16}.
That paper also mentions two numerically obtained examples of such pairs.
The first of these consisted of certain subsets $\calD$ and $\calE$ of $\bbZ_2^4$ and its dual of cardinality $6$ and $10$, respectively.
The other consisted of two subsets of $\bbZ_4^2$ and its dual of these same cardinalities.
While the latter remains a mystery,
we were able to find an explicit version of the former that, as explained in Section~4,
generalizes to an infinite family of paired difference sets:

\begin{example}
\label{ex.PDS}
Let $\calG=\bbZ_2^4$ be the elementary abelian group of order $16$.
As detailed and generalized later on,
the function $\rmQ:\bbZ_2^4\rightarrow\bbZ_2$, $\rmQ(\bfx)=\rmQ(x_1,x_2,x_3,x_4)=x_1x_2+x_3x_4+x_3^2+x_4^2$ is a \textit{quadratic} form that gives rise to the \textit{symplectic} (nondegenerate alternating bilinear) form $\rmB:\bbZ_2^4\times\bbZ_2^4\rightarrow\bbZ_2$,
$\rmB(\bfx,\bfy)=\rmQ(\bfx+\bfy)+\rmQ(\bfx)+\rmQ(\bfy)=x_1y_2+x_2y_1+x_3y_4+x_4y_3$.
A point $\bfx\in\bbZ_2^4$ is \textit{singular} if $\rmQ(\bfx)=0$, and is otherwise \textit{nonsingular}.
Let $\calD$ and $\calE=\calD^\rmc$ be the $6$- and $10$-element sets of all singular and nonsingular points of $\rmQ$, respectively:
\begin{align}
\begin{split}
\label{eq.PDS(16,6,10)}
\calD&=\set{0000,0100,1000,1101,1110,1111},\\
\calE&=\set{0001,0010,0011,0101,0110,0111,1001,1010,1011,1100}.
\end{split}
\end{align}
These are complementary difference sets for $\bbZ_2^4$.
This can be verified by noting, for example, that every nonzero element of $\bbZ_2^4$ appears in the difference table of $\calD$ exactly $\Lambda=\frac{6(6-1)}{16-1}=2$ times:
\begin{equation*}
\begin{array}{c|cccccc}
   -&0000&0100&1000&1101&1110&1111\\\hline
0000&0000&0100&1000&1101&1110&1111\\
0100&0100&0000&1100&1001&1010&1011\\
1000&1000&1100&0000&0101&0110&0111\\
1101&1101&1001&0101&0000&0011&0010\\
1110&1110&1010&0110&0011&0000&0001\\
1111&1111&1011&0111&0010&0001&0000
\end{array}.
\end{equation*}
To explicitly construct the corresponding harmonic ETFs we identify $\bbZ_2^4$ with the Pontryagin dual via the isomorphism that maps $\bfy\in\bbZ_2^4$ to the character $\bfx\mapsto(-1)^{\rmB(\bfx,\bfy)}$.
Under this identification, the character table $\bfGamma$ becomes the following $\bbZ_2^4\times\bbZ_2^4$ matrix with entries $\bfGamma(\bfx,\bfy)=(-1)^{\rmB(\bfx,\bfy)}$,
and its $(\calD\times\bbZ_2^4)$- and $(\calE\times\bbZ_2^4)$-indexed submatrices $\bfGamma_0$ and $\bfGamma_1$ are the synthesis operators of a harmonic $\ETF(6,16)$ and its Naimark-complementary harmonic $\ETF(10,16)$, respectively:
\begin{equation}
\label{eq.16 x 16 Gamma}
\bfGamma=\left[\begin{smallmatrix}
+&+&+&+&+&+&+&+&+&+&+&+&+&+&+&+\\
+&+&-&-&+&+&-&-&+&+&-&-&+&+&-&-\\
+&-&+&-&+&-&+&-&+&-&+&-&+&-&+&-\\
+&-&-&+&+&-&-&+&+&-&-&+&+&-&-&+\\
+&+&+&+&+&+&+&+&-&-&-&-&-&-&-&-\\
+&+&-&-&+&+&-&-&-&-&+&+&-&-&+&+\\
+&-&+&-&+&-&+&-&-&+&-&+&-&+&-&+\\
+&-&-&+&+&-&-&+&-&+&+&-&-&+&+&-\\
+&+&+&+&-&-&-&-&+&+&+&+&-&-&-&-\\
+&+&-&-&-&-&+&+&+&+&-&-&-&-&+&+\\
+&-&+&-&-&+&-&+&+&-&+&-&-&+&-&+\\
+&-&-&+&-&+&+&-&+&-&-&+&-&+&+&-\\
+&+&+&+&-&-&-&-&-&-&-&-&+&+&+&+\\
+&+&-&-&-&-&+&+&-&-&+&+&+&+&-&-\\
+&-&+&-&-&+&-&+&-&+&-&+&+&-&+&-\\
+&-&-&+&-&+&+&-&-&+&+&-&+&-&-&+
\end{smallmatrix}\right],
\quad
\begin{array}{c}
\bfGamma_0=\left[\begin{smallmatrix}
+&+&+&+&+&+&+&+&+&+&+&+&+&+&+&+\\
+&+&+&+&+&+&+&+&-&-&-&-&-&-&-&-\\
+&+&+&+&-&-&-&-&+&+&+&+&-&-&-&-\\
+&+&-&-&-&-&+&+&-&-&+&+&+&+&-&-\\
+&-&+&-&-&+&-&+&-&+&-&+&+&-&+&-\\
+&-&-&+&-&+&+&-&-&+&+&-&+&-&-&+
\end{smallmatrix}\right],\bigskip\\
\bfGamma_1=\left[\begin{smallmatrix}
+&+&-&-&+&+&-&-&+&+&-&-&+&+&-&-\\
+&-&+&-&+&-&+&-&+&-&+&-&+&-&+&-\\
+&-&-&+&+&-&-&+&+&-&-&+&+&-&-&+\\
+&+&-&-&+&+&-&-&-&-&+&+&-&-&+&+\\
+&-&+&-&+&-&+&-&-&+&-&+&-&+&-&+\\
+&-&-&+&+&-&-&+&-&+&+&-&-&+&+&-\\
+&+&-&-&-&-&+&+&+&+&-&-&-&-&+&+\\
+&-&+&-&-&+&-&+&+&-&+&-&-&+&-&+\\
+&-&-&+&-&+&+&-&+&-&-&+&-&+&+&-\\
+&+&+&+&-&-&-&-&-&-&-&-&+&+&+&+
\end{smallmatrix}\right].
\end{array}
\end{equation}
(Here the elements of subsets of $\bbZ_2^4$ are ordered lexicographically, and ``$+$" and ``$-$" are shorthand for $1$ and $-1$, respectively.)
In particular, $\bfGamma_0^{}\bfGamma_0^*=16\bfI$ (tightness),
the diagonal entries of $\bfGamma_0^*\bfGamma_0^{}$ are $6$ while its off-diagonal entries have modulus $2$ (equiangularity),
and $\bfGamma_0^*\bfGamma_0^{}+\bfGamma_1^*\bfGamma_1^{}=16\bfI$
(Naimark complementarity).
Such real harmonic ETFs are well known~\cite{DingF07,JasperMF14},
and yield optimal packings of $16$ lines (one-dimensional subspaces) of $\bbR^6$ and $\bbR^{10}$.
What is new here is that under the aforementioned identification of $\bbZ_2^4$ with its Pontryagin dual,
the two difference sets $\calD$ and $\calE$ are paired in the sense of Definition~\ref{def.paired difference sets}.
That is, the columns of the $(\calD\times\calE)$-indexed submatrix
\begin{equation}
\label{eq.ETF(5,10)}
\bfGamma_{01}=\left[\begin{smallmatrix}
+&+&+&+&+&+&+&+&+&+\\
+&+&+&+&+&+&-&-&-&-\\
+&+&+&-&-&-&+&+&+&-\\
+&-&-&-&+&+&-&+&+&+\\
-&+&-&+&-&+&+&-&+&+\\
-&-&+&+&+&-&+&+&-&+
\end{smallmatrix}\right]
\end{equation}
of $\bfGamma_0$ (and $\bfGamma$) form a tight frame for their span.
This is far from obvious, but can be explicitly verified by showing that
$\bfGamma_{01}^{}\bfGamma_{01}^{*}\bfGamma_{01}^{}
=12\bfGamma_{01}^{}$, or equivalently,
that $\frac1{12}\bfGamma_{01}^{*}\bfGamma_{01}^{}$ is a projection.
Here, the tight frame constant $A=12$ is significant:
since $\frac1{12}\bfGamma_{01}^{*}\bfGamma_{01}^{}$ is a $10\times 10$ projection matrix with diagonal entries $\frac 6{12}$, the columns of $\bfGamma_{01}$ form a tight frame for a subspace of $\bbR^\calD\cong\bbR^6$ of dimension $\rank(\bfGamma_{01})
=\rank(\bfGamma_{01}^*\bfGamma_{01}^{})
=\Tr(\frac1{12}\bfGamma_{01}^{*}\bfGamma_{01}^{})
=5$.
As the $10$ columns of $\bfGamma_{01}$ are moreover equiangular (being $10$ of the $16$ equiangular columns of $\bfGamma_0$)
they thus form an $\ETF(5,10)$ for their span.
This itself is remarkable: there is an optimal packing of $10$ lines in $\bbR^5$ that extends to an optimal packing of $16$ lines in $\bbR^6$.
As we now explain, it moreover implies the existence of a real $\ECTFF(6,16,5)$ and a real $\ECTFF(10,16,5)$.
\end{example}

\begin{theorem}
\label{thm.ECTFF from PDS}
Let $\calD$ and $\calE$ be paired difference sets (Definition~\ref{def.paired difference sets}) for a finite abelian group $\calG$ and its Pontryagin dual $\hat{\calG}$, respectively.
For any $\gamma\in\hat{\calG}$ let $\calU_\gamma:=\Span\set{\bfphi_{\gamma\varepsilon}}_{\varepsilon\in\calE}$ where,
for any $\varepsilon\in\calE$,
$\bfphi_{\gamma\varepsilon}\in\bbC^\calD$ is defined by $\bfphi_{\gamma\varepsilon}(d):=\gamma(d)\varepsilon(d)$ for all $d\in\calD$.
Also let
\begin{equation}
\label{eq.paired diff set rank}
R=\tfrac{DE(N-1)}{(D+E-1)N-DE}
\ \text{where}\ D:=\#(\calD),\ E:=\#(\calE),\ N:=\#(\calG)=\#(\hat{\calG}).
\end{equation}
Then $\set{\calU_\gamma}_{\gamma\in\hat{\calG}}$ is an $\ECTFF(D,N,R)$ for $\bbC^\calD$ where, for each $\gamma\in\hat{\calG}$,
$\set{\bfphi_{\gamma\varepsilon}}_{\varepsilon\in\calE}$ is an $\ETF(R,E)$ for $\calU_\gamma$ that is unitarily equivalent to $\set{\bfphi_{\varepsilon}}_{\varepsilon\in\calE}$.
Moreover, the relation of being paired is symmetric: $\calE$ and $\calD$ are also paired, yielding an analogous $\ECTFF(E,N,R)$ for $\bbC^\calE$.
\end{theorem}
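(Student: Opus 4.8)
\section*{Proof proposal}

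The plan is to reduce the whole statement to the single tight frame $\set{\bfphi_\varepsilon}_{\varepsilon\in\calE}$ furnished by Definition~\ref{def.paired difference sets}, and then verify tightness, equichordality and the rank formula in turn. First I would exploit an obvious unitary symmetry: for each $\gamma\in\hat{\calG}$ let $\bfU_\gamma$ be the operator on $\bbC^\calD$ with $(\bfU_\gamma\bfx)(d):=\gamma(d)\bfx(d)$, which is unitary since $\abs{\gamma(d)}=1$, and note $\bfphi_{\gamma\varepsilon}=\bfU_\gamma\bfphi_\varepsilon$ for every $\varepsilon\in\calE$, where $\bfphi_\varepsilon(d):=\varepsilon(d)$ is as in Definition~\ref{def.paired difference sets}. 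Hence $\set{\bfphi_{\gamma\varepsilon}}_{\varepsilon\in\calE}$ is unitarily equivalent to $\set{\bfphi_\varepsilon}_{\varepsilon\in\calE}$ and $\calU_\gamma=\bfU_\gamma(\calU_1)$, where $\calU_1:=\Span\set{\bfphi_\varepsilon}_{\varepsilon\in\calE}$ is the span attached to the trivial character. Now $\set{\bfphi_\varepsilon}_{\varepsilon\in\calE}$ is a subsequence of the harmonic $\ETF(D,N)$ for $\bbC^\calD$ determined by the difference set $\calD$ (Section~2.3), so it is equiangular and equal-norm with $\norm{\bfphi_\varepsilon}^2=D$, and by the pairing hypothesis it is a tight frame for $\calU_1$; thus it is an $\ETF(R_0,E)$ for $\calU_1$ with some tight-frame constant $A>0$, where $R_0:=\dim\calU_1$. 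Applying the unitary $\bfU_\gamma$ then shows each $\set{\bfphi_{\gamma\varepsilon}}_{\varepsilon\in\calE}$ is an $\ETF(R_0,E)$ for $\calU_\gamma$ with the same constant $A$ and that $\dim\calU_\gamma=R_0$ for all $\gamma$. It remains to prove that $\set{\calU_\gamma}_{\gamma\in\hat{\calG}}$ is tight and equichordal and that $R_0=R$.

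For tightness I would compute the frame operator of the concatenation $\set{\bfphi_{\gamma\varepsilon}}_{(\gamma,\varepsilon)\in\hat{\calG}\times\calE}$ entrywise: its $(d_1,d_2)$th entry factors as a character sum over $\calE$ times a character sum over $\hat{\calG}$, and the latter vanishes unless $d_1=d_2$ (orthogonality of characters), in which case the product is $NE$, so the frame operator is $NE\bfI$. Since each $\set{\bfphi_{\gamma\varepsilon}}_{\varepsilon\in\calE}$ is an $A$-tight frame for $\calU_\gamma$, we have $\sum_{\varepsilon\in\calE}\bfphi_{\gamma\varepsilon}^{}\bfphi_{\gamma\varepsilon}^*=A\bfP_\gamma$, and summing over $\gamma$ gives $\sum_{\gamma\in\hat{\calG}}\bfP_\gamma=\tfrac{NE}{A}\bfI$, i.e.\ tightness. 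To identify $A$ and $R_0$, let $\bfPhi_1$ be the synthesis operator of $\set{\bfphi_\varepsilon}_{\varepsilon\in\calE}$; ``tight frame for its span'' means $(\bfPhi_1^*\bfPhi_1^{})^2=A\bfPhi_1^*\bfPhi_1^{}$, so $\tfrac1A\bfPhi_1^*\bfPhi_1^{}$ is a projection of rank $R_0=\Tr(\tfrac1A\bfPhi_1^*\bfPhi_1^{})=\tfrac{ED}{A}$. Taking the trace of $(\bfPhi_1^*\bfPhi_1^{})^2=A\bfPhi_1^*\bfPhi_1^{}$ yields $\sum_{\varepsilon_1,\varepsilon_2\in\calE}\abs{\ip{\bfphi_{\varepsilon_1}}{\bfphi_{\varepsilon_2}}}^2=AED$; its $E$ diagonal terms equal $D^2$ and, because $\calD$ is a difference set, its remaining $E^2-E$ terms equal $\tfrac{D(N-D)}{N-1}$ (the common squared off-diagonal Gram modulus of a harmonic ETF, Section~2.3), whence $A=\tfrac{(D+E-1)N-DE}{N-1}$ and $R_0=\tfrac{ED}{A}=\tfrac{DE(N-1)}{(D+E-1)N-DE}=R$, consistent with $\sum_\gamma\bfP_\gamma=\tfrac{NE}{A}\bfI=\tfrac{NR}{D}\bfI$ as required of a TFF.

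The main obstacle is equichordality, since a priori there is no reason the cross-Gram Frobenius norms should all agree. Writing $\bfPhi_\gamma$ for the synthesis operator of $\set{\bfphi_{\gamma\varepsilon}}_{\varepsilon\in\calE}$, so that $\bfP_\gamma=\tfrac1A\bfPhi_\gamma^{}\bfPhi_\gamma^*$, for $\gamma_1\neq\gamma_2$ I would expand $\Tr(\bfP_{\gamma_1}\bfP_{\gamma_2})=\tfrac1{A^2}\bignorm{\bfPhi_{\gamma_1}^*\bfPhi_{\gamma_2}^{}}_\Fro^2=\tfrac1{A^2}\sum_{\varepsilon_1,\varepsilon_2\in\calE}\abs{\ip{\bfphi_{\gamma_1\varepsilon_1}}{\bfphi_{\gamma_2\varepsilon_2}}}^2$. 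The key observation is that $\bfphi_{\gamma_i\varepsilon_i}$ is nothing but the column of the harmonic $\ETF(D,N)$ for $\bbC^\calD$ indexed by the character $\gamma_i\varepsilon_i\in\hat{\calG}$, so $\abs{\ip{\bfphi_{\gamma_1\varepsilon_1}}{\bfphi_{\gamma_2\varepsilon_2}}}^2$ equals $D^2$ when $\gamma_1\varepsilon_1=\gamma_2\varepsilon_2$ and the common value $\tfrac{D(N-D)}{N-1}$ otherwise. The equation $\gamma_1\varepsilon_1=\gamma_2\varepsilon_2$ rearranges to $\varepsilon_1\varepsilon_2^{-1}=\gamma_2\gamma_1^{-1}$, and since $\gamma_2\gamma_1^{-1}\neq1$ and $\calE$ is a difference set for $\hat{\calG}$, the number of pairs $(\varepsilon_1,\varepsilon_2)\in\calE\times\calE$ solving it is $\tfrac{E(E-1)}{N-1}$, independent of $\gamma_1,\gamma_2$. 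Hence $\bignorm{\bfPhi_{\gamma_1}^*\bfPhi_{\gamma_2}^{}}_\Fro^2=\tfrac{E(E-1)}{N-1}D^2+\bigparen{E^2-\tfrac{E(E-1)}{N-1}}\tfrac{D(N-D)}{N-1}$ is constant over all $\gamma_1\neq\gamma_2$, so $\Tr(\bfP_{\gamma_1}\bfP_{\gamma_2})$ is too, and $\set{\calU_\gamma}_{\gamma\in\hat{\calG}}$ is an $\ECTFF(D,N,R)$ (its common $\Tr(\bfP_{\gamma_1}\bfP_{\gamma_2})$ then necessarily equalling $\tfrac{R(NR-D)}{D(N-1)}$ by the equality case of~\eqref{eq.generalized Welch}). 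What saves the day is precisely that $\bignorm{\bfPhi_{\gamma_1}^*\bfPhi_{\gamma_2}^{}}_\Fro^2$ decouples into a contribution $D^2$ from each ``aligned'' pair---whose count is uniform because $\calE$ is a difference set for $\hat{\calG}$---plus a uniform contribution $\tfrac{D(N-D)}{N-1}$ from every other pair---uniform because $\calD$ is a difference set for $\calG$---so both difference-set hypotheses get used, in different places.

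Finally, for symmetry I would observe that $\set{\bfpsi_d}_{d\in\calD}\subseteq\bbC^\calE$ with $\bfpsi_d(\varepsilon):=d(\varepsilon)=\varepsilon(d)$ has synthesis operator $\bfPhi_1^\top$, and that $(\bfPhi_1^\top)^*\bfPhi_1^\top=\overline{\bfPhi_1}\,\bfPhi_1^\top=\overline{\bfPhi_1\bfPhi_1^*}$; hence, conjugating and using that $A$ is real, $\set{\bfpsi_d}_{d\in\calD}$ is an $A$-tight frame for its span if and only if $(\bfPhi_1^{}\bfPhi_1^*)^2=A\bfPhi_1^{}\bfPhi_1^*$, which is exactly the pairing hypothesis for $\set{\bfphi_\varepsilon}_{\varepsilon\in\calE}$. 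As $\calE$ is a difference set for $\hat{\calG}$ and $\calD$ is one for $\hat{\hat{\calG}}\cong\calG$, this shows $\calE$ is paired with $\calD$, and rerunning the argument above with the roles of $(\calD,\calG)$ and $(\calE,\hat{\calG})$ interchanged yields an $\ECTFF(E,N,R)$ for $\bbC^\calE$, the rank being unchanged since~\eqref{eq.paired diff set rank} is symmetric in $D$ and $E$. Apart from equichordality, every step---the unitary equivalence, tightness via character orthogonality, the trace computation of $A$, and the transpose identity for symmetry---is routine.
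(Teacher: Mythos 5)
Your proposal is correct and follows essentially the same route as the paper's proof: unitary equivalence of the shifted subsequences, the count of ``aligned'' pairs $\gamma_1\varepsilon_1=\gamma_2\varepsilon_2$ via $\calE$ being a difference set for $\hat{\calG}$ together with the constant off-diagonal moduli of the harmonic $\ETF(D,N)$ for equichordality, and the transpose/conjugation identity for symmetry. Your only departures are cosmetic: you identify $A$ and $R$ by a trace computation (which conveniently avoids the paper's separate $E=1$ remark) and verify tightness by direct character orthogonality rather than the paper's counting of how many $\gamma$ satisfy $\gamma'\in\gamma\calE$, both of which amount to the same facts.
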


\begin{proof}
Since $\calD$ is a difference set for $\calG$ its harmonic frame \smash{$\set{\bfphi_\gamma}_{\gamma\in\hat{\calG}}$}, $\bfphi_\gamma(d):=\gamma(d)$ is an $\ETF(D,N)$ for $\bbC^\calD$.
Since $\calD$ and $\calE$ are paired,
the corresponding subsequence $\set{\bfphi_\varepsilon}_{\varepsilon\in\calE}$ of this $\ETF(D,N)$ is, by definition, a tight frame for $\calU_1=\Span\set{\bfphi_\varepsilon}_{\varepsilon\in\calE}$.
Moreover, since $\set{\bfphi_\gamma}_{\gamma\in\hat{\calG}}$ is equiangular,
this subsequence $\set{\bfphi_\varepsilon}_{\varepsilon\in\calE}$ is also equiangular,
and the two sequences share the same coherence (Welch bound).
In particular, $\set{\bfphi_\varepsilon}_{\varepsilon\in\calE}$ is an $\ETF(R,E)$ for $\calU_1$ where $R=\dim(\calU_1)$ satisfies
\begin{equation*}
(\tfrac{E}{R}-1)\tfrac1{E-1}
=\tfrac{E-R}{R(E-1)}
=\tfrac{N-D}{D(N-1)}.
\end{equation*}
Solving for $R$ gives~\eqref{eq.paired diff set rank}.
(In the degenerate case where $E=1$,
instead note that the single vector $\set{\bfphi_\varepsilon}_{\varepsilon\in\calE}$ is an ETF for its span, which has dimension \smash{$\tfrac{DE(N-1)}{(D+E-1)N-DE}=\tfrac{D(N-1)}{DN-D}=1=R$}.)
Next, for any $\gamma\in\hat{\calG}$,
$\set{\bfphi_{\gamma\varepsilon}}_{\varepsilon\in\calE}$ and $\set{\bfphi_\varepsilon}_{\varepsilon\in\calE}$ have the same Gram matrix, implying they are unitarily equivalent:
for any $\varepsilon_1,\varepsilon_2\in\calE$,
\begin{equation*}
\ip{\bfphi_{\gamma\varepsilon_1}}{\bfphi_{\gamma\varepsilon_2}}
=\sum_{d\in\calD}\overline{\gamma(d)\varepsilon_1(d)}\gamma(d)\varepsilon_2(d)
=\sum_{d\in\calD}\overline{\varepsilon_1(d)}\varepsilon_2(d)
=\ip{\bfphi_{\varepsilon_1}}{\bfphi_{\varepsilon_2}}.
\end{equation*}
In particular,
for any $\gamma\in\hat{\calG}$,
$\set{\bfphi_{\gamma\varepsilon}}_{\varepsilon\in\calE}$ is an $\ETF(R,E)$ for its span $\calU_\gamma$.

Next, to show that \smash{$\set{\calU_\gamma}_{\gamma\in\hat{\calG}}$} is an $\ECTFF(D,N,R)$ for $\bbC^\calD$,
let $\bfPhi_\gamma$ be the synthesis operator of $\set{\bfphi_{\gamma\varepsilon}}_{\varepsilon\in\calE}$.
Since $\set{\bfphi_{\gamma\varepsilon}}_{\varepsilon\in\calE}$ is an $E$-vector tight frame for $\calU_\gamma$ and $\norm{\bfphi_{\gamma\varepsilon}}^2=D$ for all $\varepsilon$,
the projection $\bfP_\gamma$ onto $\calU_\gamma$ can be expressed as
\smash{$\bfP_\gamma
=\frac{R}{DE}\bfPhi_\gamma^{}\bfPhi_\gamma^*$}.
To see that \smash{$\set{\calU_\gamma}_{\gamma\in\hat{\calG}}$} is a TFF for $\bbC^\calD$ note that for any $\gamma'\in\hat{\calG}$, there are exactly $E$ choices of $\gamma\in\hat{\calG}$ such that $\gamma'\in\gamma\calE$.
This allows us to write the fusion frame operator of \smash{$\set{\calU_\gamma}_{\gamma\in\hat{\calG}}$} in terms of the synthesis operator $\bfPhi$ of \smash{$\set{\bfphi_\gamma}_{\gamma\in\hat{\calG}}$}:
\begin{equation}
\label{eq.proof of ECTFF from PDS 1}
\sum_{\gamma\in\hat{\calG}}\bfP_\gamma
=\sum_{\gamma\in\hat{\calG}}\tfrac{R}{DE}\bfPhi_\gamma^{}\bfPhi_\gamma^*
=\tfrac{R}{DE}\sum_{\gamma\in\hat{\calG}}\sum_{\varepsilon\in\calE}
\bfphi_{\gamma\varepsilon}^{}\bfphi_{\gamma\varepsilon}^*
=\tfrac{R}{D}\sum_{\gamma'\in\hat{\calG}}\bfphi_{\gamma'}^{}\bfphi_{\gamma'}^{*}
=\tfrac{R}{D}\bfPhi\bfPhi^*
=\tfrac{NR}{D}\bfI.
\end{equation}
Next, to show that \smash{$\set{\calU_\gamma}_{\gamma\in\hat{\calG}}$} is equichordal,
note that for any $\gamma_1,\gamma_2\in\hat{\calG}$,
$\gamma_1\neq\gamma_2$,
\begin{equation*}
\Tr(\bfP_{\gamma_1}\bfP_{\gamma_2})
=\tfrac{R^2}{D^2E^2}\Tr(\bfPhi_{\gamma_1}^{}\bfPhi_{\gamma_1}^*\bfPhi_{\gamma_2}^{}\bfPhi_{\gamma_2}^*)
=\tfrac{R^2}{D^2E^2}\norm{\bfPhi_{\gamma_1}^*\bfPhi_{\gamma_2}^{}}_{\Fro}^2
=\tfrac{R^2}{D^2E^2}\sum_{\varepsilon_1\in\calE}\sum_{\varepsilon_2\in\calE}
\abs{\ip{\bfphi_{\gamma_1\varepsilon_1}}{\bfphi_{\gamma_2\varepsilon_2}}}^2.
\end{equation*}
Here, since \smash{$\set{\bfphi_\gamma}_{\gamma\in\hat{\calG}}$} is an $\ETF(D,N)$ with $\norm{\bfphi_\gamma}^2=D$ for all $\gamma$,
we have \smash{$\abs{\ip{\bfphi_{\gamma}}{\bfphi_{\gamma'}}}^2=\frac{D(N-D)}{N-1}$} for all $\gamma,\gamma'\in\hat{\calG}$ with $\gamma\neq\gamma'$.
As such, the value of the above sum depends entirely on the number of pairs $(\varepsilon_1,\varepsilon_2)\in\calE\times\calE$ such that $\gamma_1\varepsilon_1=\gamma_2\varepsilon_2$,
that is, such that $\gamma_1^{}\gamma_2^{-1}=\varepsilon_1^{-1}\varepsilon_2^{}$.
Since $\gamma_1\neq\gamma_2$ and $\calE$ is a difference set for $\hat{\calG}$,
this number is exactly \smash{$\frac{E(E-1)}{N-1}$}.
That is, for any $\gamma_1\neq\gamma_2$,
\begin{equation*}
\Tr(\bfP_{\gamma_1}\bfP_{\gamma_2})
=\tfrac{R^2}{D^2E^2}\sum_{\varepsilon_1\in\calE}\sum_{\varepsilon_2\in\calE}
\abs{\ip{\bfphi_{\gamma_1\varepsilon_1}}{\bfphi_{\gamma_2\varepsilon_2}}}^2
=\tfrac{R^2}{D^2E^2}\bigset{\tfrac{E(E-1)}{N-1}D^2+[E^2-\tfrac{E(E-1)}{N-1}]\tfrac{D(N-D)}{N-1}}.
\end{equation*}
Since this value is constant over all $\gamma_1\neq\gamma_2$,
\smash{$\set{\calU_\gamma}_{\gamma\in\hat{\calG}}$} is an ECTFF.
(Alternatively, we may forgo~\eqref{eq.proof of ECTFF from PDS 1} provided we instead use~\eqref{eq.paired diff set rank} to show that the above value for $\Tr(\bfP_{\gamma_1}\bfP_{\gamma_2})$ simplifies to \smash{$\frac{R(NR-D)}{D(N-1)}$},
meaning $\set{\calU_\gamma}_{\gamma\in\hat{\calG}}$ achieves equality throughout~\eqref{eq.generalized Welch} and so is necessarily tight.)

For the final conclusions,
recall that in general,
we identify $\calG$ with the Pontryagin dual of $\hat{\calG}$ via the isomorphism $g\mapsto(\gamma\mapsto\gamma(g))$, that is, we define $g(\gamma):=\gamma(g)$.
Since $\calE$ is a difference set for $\hat{\calG}$,
its harmonic frame $\set{\bfpsi_g}_{g\in\calG}$, $\bfpsi_g(\varepsilon):=g(\varepsilon)=\varepsilon(g)$
is an $\ETF(E,N)$ for $\bbC^\calE$.
To show that $\calE$ and $\calD$ are paired,
we show the corresponding subsequence $\set{\bfpsi_d}_{d\in\calD}$ of this harmonic ETF is a tight frame for its span.
To do this, note the synthesis operators $\bfPhi_{\calE}$ and $\bfPsi_{\calD}$ of $\set{\bfphi_\varepsilon}_{\varepsilon\in\calE}$ and $\set{\bfpsi_d}_{d\in\calD}$, respectively,
are transposes of each other: for any $d\in\calD$, $\varepsilon\in\calE$,
\begin{equation*}
\bfPsi_{\calD}(\varepsilon,d)
=\bfpsi_d(\varepsilon)
=d(\varepsilon)=\varepsilon(d)
=\bfphi_\varepsilon(d)
=\bfPhi_{\calE}(d,\varepsilon).
\end{equation*}
At the same time, since $\calD$ and $\calE$ are paired, $\set{\bfphi_\varepsilon}_{\varepsilon\in\calE}$ is a tight frame for its span and so there exists $A>0$ such that
$\bfPhi_{\calE}^{}\bfPhi_{\calE}^{*}\bfPhi_{\calE}^{}=A\bfPhi_{\calE}^{}$.
Taking transposes of this equation thus gives
\begin{equation*}
A\bfPsi_{\calD}
=A\bfPhi_\calE^\rmT
=(\bfPhi_{\calE}^{}\bfPhi_{\calE}^{*}\bfPhi_{\calE}^{})^\rmT
=\bfPhi_{\calE}^{\rmT}(\bfPhi_{\calE}^*)^{\rmT}\bfPhi_{\calE}^{\rmT}
=\bfPhi_{\calE}^{\rmT}(\bfPhi_{\calE}^\rmT)^{*}\bfPhi_{\calE}^{\rmT}
=\bfPsi_{\calD}^{}\bfPsi_{\calD}^{*}\bfPsi_{\calD}^{},
\end{equation*}
and so $\set{\bfpsi_d}_{d\in\calD}$ is indeed a tight frame for its span.
Since the expression given for $R$ in~\eqref{eq.paired diff set rank} is symmetric with respect to $D$ and $E$, this span has dimension $R$.
As such, applying the first part of this theorem to it yields an $\ECTFF(E,N,R)$ for $\bbC^\calE$, as claimed.
\end{proof}

\begin{example}
\label{ex.ECTFF}
When applied to the paired difference sets $\calD$ and $\calE$ of \eqref{eq.PDS(16,6,10)} of Example~\ref{ex.PDS},
Theorem~\ref{thm.ECTFF from PDS} implies that the columns of $\bfGamma_{01}$ of~\eqref{eq.ETF(5,10)} form an $\ETF(5,10)$ for their span,
that the $\ETF(6,16)$ formed by the columns of $\bfGamma_0$ of~\eqref{eq.16 x 16 Gamma} contains sixteen unitarily equivalent copies of this $\ETF(5,10)$ (each indexed by a shift $\bfx+\calE$ of $\calE$),
and that the spans of these copies form an $\ECTFF(6,16,5)$ for $\bbC^\calD$.
In fact, since the character table $\bfGamma$ of~\eqref{eq.16 x 16 Gamma} of $\bbZ_2^4$ is real-valued,
this ECTFF is real.
The existence of a real $\ECTFF(6,16,5)$ is not new:
one arises, for example, by taking the spatial complement of a real $\ECTFF(6,16,1)$ that equates to a real $\ETF(6,16)$.

Theorem~\ref{thm.ECTFF from PDS} further gives that $\calE$ and $\calD$ are paired,
and this yields a seemingly new ECTFF.
As seen in its proof, this stems from the fact that the character tables of a finite abelian group and its Pontryagin dual are transposes of each other,
and the fact that the columns of a matrix form a tight frame for their span if and only if their rows do as well.
In general, this means that the $\ECTFF(E,N,R)$ produced by Theorem~\ref{thm.ECTFF from PDS} arises as the row spaces of the various $[(g+\calD)\times\calE)]$-indexed submatrices of $\bfGamma$.
For this particular example,
we further have that $\calE=\calD^\rmc$ where $\bbZ_2^4$ has been identified with its Pontryagin dual in a way that makes $\bfGamma$ of~\eqref{eq.16 x 16 Gamma} symmetric.
As such, Theorem~\ref{thm.ECTFF from PDS} moreover gives here that the columns of the $(\calE\times\calD)$-indexed submatrix of $\bfGamma$ (the transpose of~\eqref{eq.ETF(5,10)}) form an $\ETF(5,6)$ for their span,
that the $\ETF(10,16)$ formed by the columns of $\bfGamma_1$ of~\eqref{eq.16 x 16 Gamma} (the Naimark complement of that formed by the columns of $\bfGamma_0$) contains sixteen unitarily equivalent copies of this $\ETF(5,6)$ (each indexed by a shift $\calD$),
and that the spans of these copies form a real $\ECTFF(10,16,5)$ for $\bbR^\calE$.
We know of no other construction of an ECTFF with these parameters (real or complex).
\end{example}

In general,
when paired difference sets $\calD$ and $\calE$ for $\calG$ and $\hat{\calG}$ exist, they are not unique.
For example,
Theorem~\ref{thm.ECTFF from PDS} gives that for any $\gamma\in\hat{\calG}$, $\set{\bfphi_{\gamma\varepsilon}}_{\varepsilon\in\calE}$ is an ETF (and so a tight frame) for its span, implying $\calD$ and $\gamma\calE$ are also paired.
Since Theorem~\ref{thm.ECTFF from PDS} also gives that ``pairing" is a symmetric relation,
we further have that $g+\calD$ and $\gamma\calE$ are paired regardless of one's choice of $g\in\calG$, $\gamma\in\hat{\calG}$.
One can also apply an automorphism $\sigma$ of $\calG$ to $\calD$ provided one simultaneously applies the induced automorphism $\gamma\mapsto\gamma\circ\sigma^{-1}$ to $\hat{\calG}$.
This is because the $[\sigma(\calD)\times(\calE\circ\sigma^{-1})]$-indexed submatrix of $\bfGamma$ can be obtained by pre- and post-multiplying its $(\calD\times\calE)$-indexed submatrix by permutation matrices,
implying its columns form a tight frame for their span.

The next result gives the most fundamental characterization of paired difference sets that we have found so far.
It is not purely combinatorial,
but rather involves sums over certain submatrices of the character table.
One can use it, for example, to obtain alternate proofs of the above facts.

\begin{theorem}
Let $\calD$ and $\calE$ be nonempty difference sets for a finite abelian group $\calG$ and its Pontryagin dual $\hat{\calG}$, respectively.
Then $\calD$ and $\calE$ are paired (Definition~\ref{def.paired difference sets}) if and only if
\begin{equation}
\label{eq.partial character table sum}
\sum\nolimits_{(d',\varepsilon')\in(\calD-d)\times(\varepsilon^{-1}\calE)}\varepsilon'(d')
\end{equation}
has constant value over all $d\in\calD$, $\varepsilon\in\calE$.
\end{theorem}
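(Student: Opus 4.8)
The plan is to reduce the statement to the tight-frame criterion recorded in Section~2 by identifying the sum in~\eqref{eq.partial character table sum} with a single entry of a frame operator. Write $\bfPhi_\calE$ for the synthesis operator of the sequence $\set{\bfphi_\varepsilon}_{\varepsilon\in\calE}\subseteq\bbC^\calD$, $\bfphi_\varepsilon(d):=\varepsilon(d)$, and let $\bfF:=\bfPhi_\calE^{}\bfPhi_\calE^*=\sum_{\varepsilon\in\calE}\bfphi_\varepsilon^{}\bfphi_\varepsilon^*$ be its frame operator on $\bbC^\calD$, a self-adjoint $\calD\times\calD$ matrix. Denoting by $T(d,\varepsilon)$ the sum in~\eqref{eq.partial character table sum}, the heart of the matter is the identity
\[
T(d,\varepsilon)=\bfphi_\varepsilon(d)\,\overline{(\bfF\bfphi_\varepsilon)(d)},\qquad d\in\calD,\ \varepsilon\in\calE.
\]

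To establish this I would reindex the sum, replacing $d'\in\calD-d$ by $d''\in\calD$ via $d'=d''-d$ and $\varepsilon'\in\varepsilon^{-1}\calE$ by $\varepsilon''\in\calE$ via $\varepsilon'=\varepsilon^{-1}\varepsilon''$, so that $T(d,\varepsilon)=\sum_{d''\in\calD}\sum_{\varepsilon''\in\calE}(\varepsilon^{-1}\varepsilon'')(d''-d)$. Expanding the character multiplicatively and using $\chi(-x)=\overline{\chi(x)}$ gives $(\varepsilon^{-1}\varepsilon'')(d''-d)=\overline{\varepsilon(d'')}\,\varepsilon''(d'')\,\varepsilon(d)\,\overline{\varepsilon''(d)}$, whence $T(d,\varepsilon)=\varepsilon(d)\sum_{d''\in\calD}\overline{\varepsilon(d'')}\bigparen{\sum_{\varepsilon''\in\calE}\varepsilon''(d'')\overline{\varepsilon''(d)}}$. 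The inner sum over $\calE$ is exactly the $(d'',d)$ entry of $\bfF=\sum_{\varepsilon''\in\calE}\bfphi_{\varepsilon''}^{}\bfphi_{\varepsilon''}^*$, and self-adjointness of $\bfF$ turns the remaining sum over $\calD$ into $\sum_{d''\in\calD}\overline{\varepsilon(d'')}\bfF(d'',d)=\overline{\sum_{d''\in\calD}\bfF(d,d'')\bfphi_\varepsilon(d'')}=\overline{(\bfF\bfphi_\varepsilon)(d)}$; since $\varepsilon(d)=\bfphi_\varepsilon(d)$, the displayed identity follows. Keeping additive notation on $\calG$, multiplicative notation on $\hat{\calG}$, and the conjugations straight is the only step that requires care.

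Granting the identity, the equivalence drops out of the unimodularity $\abs{\bfphi_\varepsilon(d)}=\abs{\varepsilon(d)}=1$ for all $d\in\calD$. If $\calD$ and $\calE$ are paired, then $\bfF$ acts as $A\bfI$ on $\Span\set{\bfphi_\varepsilon}_{\varepsilon\in\calE}$ for some $A>0$, so $\bfF\bfphi_\varepsilon=A\bfphi_\varepsilon$ for every $\varepsilon$ and hence $T(d,\varepsilon)=A\abs{\bfphi_\varepsilon(d)}^2=A$ for all $d,\varepsilon$: the sum is constant. Conversely, if $T(d,\varepsilon)\equiv c$, then dividing by the unimodular scalar $\bfphi_\varepsilon(d)$ yields $(\bfF\bfphi_\varepsilon)(d)=\overline c\,\bfphi_\varepsilon(d)$ for every $d\in\calD$, i.e.\ $\bfF\bfphi_\varepsilon=\overline c\,\bfphi_\varepsilon$ for every $\varepsilon\in\calE$; taking the inner product with $\bfphi_\varepsilon$ gives $\overline c\,D=\overline c\norm{\bfphi_\varepsilon}^2=\ip{\bfphi_\varepsilon}{\bfF\bfphi_\varepsilon}=\norm{\bfPhi_\calE^*\bfphi_\varepsilon}^2\geq\norm{\bfphi_\varepsilon}^4=D^2$, so $\overline c\geq D>0$ and we may set $A:=\overline c$. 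Then $\bfF\bfPhi_\calE^{}=A\bfPhi_\calE^{}$, that is $\bfPhi_\calE^{}\bfPhi_\calE^*\bfPhi_\calE^{}=A\bfPhi_\calE^{}$, which by Section~2 says exactly that $\set{\bfphi_\varepsilon}_{\varepsilon\in\calE}$ is a tight frame for its span, i.e.\ that $\calD$ and $\calE$ are paired.

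I expect no genuine obstacle beyond the clerical one of verifying the displayed identity without a sign or conjugation slip; once it is in hand, both directions are purely formal. I would also remark that the difference-set hypotheses on $\calD$ and $\calE$ are invoked only so that Definition~\ref{def.paired difference sets} is applicable — the identity and the equivalence hold verbatim for arbitrary nonempty $\calD\subseteq\calG$ and $\calE\subseteq\hat{\calG}$.
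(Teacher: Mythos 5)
Your proposal is correct and follows essentially the same route as the paper: both reduce ``paired'' to the entrywise condition $\bfPhi_\calE^{}\bfPhi_\calE^*\bfPhi_\calE^{}=A\bfPhi_\calE^{}$, identify the sum~\eqref{eq.partial character table sum} with a unimodular multiple of an entry of $\bfPhi_\calE^{}\bfPhi_\calE^*\bfPhi_\calE^{}$ by the same change of variables, and then argue that the constant may be taken positive (the paper via the nonzero eigenvalue of the PSD matrix $\bfPhi_\calE^*\bfPhi_\calE^{}$, you via the equally valid estimate $\overline{c}\,D=\|\bfPhi_\calE^*\bfphi_\varepsilon\|^2\geq D^2$). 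Your closing remark that the difference-set hypotheses are not used in the argument is also consistent with the paper's proof.
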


\begin{proof}
Let $\bfPhi$ be the $(\calD\times\calE)$-indexed submatrix of the character table of $\calG$, which is defined by having $\bfPhi(d,\varepsilon)=\varepsilon(d)$ for all $d\in\calD$, $\varepsilon\in\calE$.
By definition, $\calD$ and $\calE$ are paired if and only if $\bfPhi\bfPhi^*\bfPhi=A\bfPhi$ for some $A>0$.
Since $\bfPhi\neq\bfzero$,
this occurs if and only if $\bfPhi\bfPhi^*\bfPhi=A\bfPhi$ for some $A\in\bbC$:
in the latter case, we have $(\bfPhi^*\bfPhi)^2=A(\bfPhi^*\bfPhi)$,
meaning $A$ is the nonzero eigenvalue of the nonzero positive semidefinite matrix $\bfPhi^*\bfPhi$.
This equates to having $A\in\bbC$ such that
\begin{equation*}
A\varepsilon(d)
=A\bfPhi(d,\varepsilon)
=\sum_{d'\in\calD}\sum_{\varepsilon'\in\calE}\bfPhi(d,\varepsilon')\bfPhi^*(\varepsilon',d')\bfPhi(d',\varepsilon)
=\sum_{d'\in\calD}\sum_{\varepsilon'\in\calE}\varepsilon'(d)\overline{\varepsilon'(d')}\varepsilon(d')
\end{equation*}
for all $d\in\calD$, $\varepsilon\in\calE$.
Simplifying,
this occurs if and only if for some $A\in\bbC$,
\begin{equation*}
\overline{A}
=\sum_{d'\in\calD}\sum_{\varepsilon'\in\calE}\overline{\varepsilon'(d)}\varepsilon'(d')\overline{\varepsilon(d')}\varepsilon(d)
=\sum_{d'\in\calD}\sum_{\varepsilon'\in\calE}(\varepsilon'\varepsilon^{-1})(d'-d)
\end{equation*}
for all $d\in\calD$, $\varepsilon\in\calE$.
Making a change of variables gives that this equates to the value of~\eqref{eq.partial character table sum} being constant over all $d\in\calD$, $\varepsilon\in\calE$.
\end{proof}

Some paired difference sets $\calD$ and $\calE$ are \textit{trivial} in the sense that either $\calD$ or $\calE$ is either a singleton set or is its entire group.
In such cases, Theorem~\ref{thm.ECTFF from PDS} still applies, but the resulting ETFs and ECTFFs are not new.
To explain,
any singleton set $\calD$ is a difference set for $\calG$ and it pairs with any nonempty difference set $\calE$ for $\hat{\calG}$ since $\set{\bfphi_\varepsilon}_{\varepsilon\in\calE}$ equates to a sequence of $E$ unimodular scalars in the one-dimensional space $\bbC^\calD\cong\bbC$.
Also, \smash{$\calE=\hat{\calG}$} is a difference set for $\hat{\calG}$ that pairs with any nonempty difference set $\calD$ for $\calG$ since \smash{$\set{\bfphi_\varepsilon}_{\varepsilon\in\calE}$} is the harmonic ETF arising from $\calD$.
In either of these two cases the resulting $\ECTFF(D,N,D)$ \smash{$\set{\calU_\gamma}_{\gamma\in\hat{\calG}}$} consists of $N$ copies of the entire space $\bbC^\calD$.
Meanwhile, any singleton set $\calE$ is a difference set for \smash{$\hat{\calG}$}
and it pairs with any nonempty difference set $\calD$ for $\calG$ since the single vector $\set{\bfphi_{\varepsilon}}_{\varepsilon\in\calE}$ is a tight frame for its one-dimensional span.
In this case, the $\ECTFF(D,N,1)$ \smash{$\set{\calU_\gamma}_{\gamma\in\hat{\calG}}$} consists of the $N$ one-dimensional subspaces of $\bbC^\calD$ that are individually spanned by the members of the underlying harmonic $\ETF(D,N)$ \smash{$\set{\bfphi_\gamma}_{\gamma\in\hat{\calG}}$} for $\bbC^\calD$.

The remaining ``trivial" case is the most interesting in that it yields a nontrivial result:
$\calD=\calG$ is a difference set for $\calG$ that pairs with any nonempty difference set $\calE$ for $\hat{\calG}$ since in this case $\set{\bfphi_\varepsilon}_{\varepsilon\in\calE}$ is a sequence of equal-norm orthogonal vectors (and so is a tight frame for its span).
In this case, for any $\gamma\in\hat{\calG}$,
the corresponding subspace $\calU_\gamma=\Span\set{\bfphi_{\gamma\varepsilon}}_{\varepsilon\in\calE}$ of $\bbC^\calD=\bbC^\calG$ is the span of the characters of $\calG$ that happen to lie in $\gamma\calE:=\set{\gamma\varepsilon: \varepsilon\in\calE}$.
Taking the DFT of $\calU_\gamma$ thus yields the subspace of \smash{$\bbC^{\hat{\calG}}$} that is spanned by the members of the standard basis that are indexed by $\gamma\calE$,
namely
$\bfGamma^*\calU_\gamma
=\Span\set{\bfGamma^*\bfphi_{\gamma\varepsilon}}_{\varepsilon\in\calE}
=\Span\set{\bfdelta_{\gamma\varepsilon}}_{\varepsilon\in\calE}
=\Span\set{\bfdelta_{\gamma'}}_{\gamma'\in\gamma\calE}$.
Since the DFT is a scalar multiple of a unitary operator,
$\set{\calU_\gamma}_{\gamma\in\hat{\calG}}$ is an ECTFF for $\bbC^{\calD}=\bbC^\calG$ if and only if $\set{\bfGamma^*\calU_\gamma}_{\gamma\in\hat{\calG}}$ is an ECTFF for \smash{$\bbC^{\hat{\calG}}$}.
While Theorem~\ref{thm.ECTFF from PDS} gives the former,
the latter is known:
Zauner~\cite{Zauner99} showed that if $(\calV,\calB)$ is any BIBD,
then the subspaces $\set{\calX_v}_{v\in\calV}$ of $\bbR^\calB$,
$\calX_v:=\Span\set{\bfdelta_b: v\in b\in\calB}$ form an ECTFF for $\bbR^\calB$.
Here, since $\calE$ is a difference set of $\hat{\calG}$,
$\calE^{-1}:=\set{\varepsilon^{-1}: \varepsilon\in\calE}$ is as well,
implying $\calB=\set{\gamma\calE^{-1}}_{\gamma\in\hat{\calG}}$ is the block set for a BIBD on \smash{$\calV=\hat{\calG}$}.
In this case, the $\gamma$th subspace of Zauner's construction is
\smash{$\calX_\gamma
=\Span\set{\bfdelta_{\gamma'}: \gamma\in\gamma'\calE^{-1}}
=\Span\set{\bfdelta_{\gamma'}}_{\gamma'\in\gamma\calE}
=\bfGamma^*\calU_\gamma$}.

When $\calD$ and $\calE$ are nontrivial paired difference sets the $\ECTFF(D,N,R)$ $\set{\calU_\gamma}_{\gamma\in\hat{\calG}}$ constructed in Theorem~\ref{thm.ECTFF from PDS} consists of $N$ proper and distinct subspaces of \smash{$\bbC^\calD$}.
Indeed,
having $1<D$ and $E<N$ implies
\smash{$R
=\tfrac{DE(N-1)}{(D+E-1)N-DE}
<D$} and so
\smash{$[\dist(\calU_{\gamma_1},\calU_{\gamma_2})]^2
=\tfrac{R(D-R)}{D}\tfrac{N}{N-1}>0$} for any $\gamma_1\neq\gamma_2$.
Moreover, in this nontrivial case, these subspaces are not equi-isoclinic since they have nontrivial intersection:
since $\calE$ is a difference set for $\hat{\calG}$,
\smash{$\#(\gamma_1\calE\cap\gamma_2\calE)
=\#[\calE\cap(\gamma_1^{-1}\gamma_2^{})\calE]
=\frac{E(E-1)}{N-1}>0$} for any $\gamma_1\neq\gamma_2$,
implying
$\calU_{\gamma_1}\cap\calU_{\gamma_2}
=\Span\set{\bfphi_\gamma}_{\gamma\in\gamma_1\calE}
\cap\Span\set{\bfphi_\gamma}_{\gamma\in\gamma_1\calE}
\supseteq\Span\set{\bfphi_\gamma}_{\gamma\in\gamma_1\calE\cap\gamma_2\calE}$ has positive dimension.
As such, some but not all of the principal angles between $\calU_{\gamma_1}$ and $\calU_{\gamma_2}$ are zero.
We further note that if $\calD$ and $\calE$ are nontrivial paired difference sets then so are $\calE$ and $\calD$, implying the $N$ subspaces of the resulting $\ECTFF(E,N,R)$ are also proper, distinct, and not equi-isoclinic.

We conclude this section with a necessary condition on the existence of paired difference sets, and a characterization of when the complements of paired difference sets are themselves paired:

\begin{theorem}
\label{thm.necessary}
Let $\calD$ and $\calE$ be paired difference sets (Definition~\ref{def.paired difference sets}) for a finite abelian group $\calG$ and its Pontryagin dual $\hat{\calG}$, respectively, where $\calD\neq\calG$ and $\calE\neq\hat{\calG}$.
Then
\begin{equation}
\label{eq.necessary}
D+E\leq N
\ \text{where}\ D:=\#(\calD),\ E:=\#(\calE),\ N:=\#(\calG)=\#(\hat{\calG}).
\end{equation}
Moreover, $\calD^\rmc$ and $\calE^\rmc$ are paired difference sets if and only if $D+E=N$.
\end{theorem}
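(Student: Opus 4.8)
The plan is to work with the full $\calG\times\hat\calG$ character table $\bfGamma$, which is $\sqrt{N}$ times a unitary matrix, so that both $\bfGamma\bfGamma^*=N\bfI$ and $\bfGamma^*\bfGamma=N\bfI$, and to track the four submatrices cut out by the partitions $\calG=\calD\sqcup\calD^\rmc$ and $\hat\calG=\calE\sqcup\calE^\rmc$. Write $\bfPhi:=\bfGamma|_{\calD\times\calE}$, so that $\calD$ and $\calE$ being paired means $\bfPhi\bfPhi^*\bfPhi=A\bfPhi$ for some $A>0$, equivalently $\bfPhi\bfPhi^*=A\bfP$ where $\bfP$ is the rank-$R$ orthogonal projection of $\bbC^\calD$ onto $\Span\set{\bfphi_\varepsilon}_{\varepsilon\in\calE}$. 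Also write $\bfPhi':=\bfGamma|_{\calD\times\calE^\rmc}$ and $\bfPsi:=\bfGamma|_{\calD^\rmc\times\calE^\rmc}$. The two relations that drive everything come from restricting $\bfGamma\bfGamma^*=N\bfI$ to the rows indexed by $\calD$, giving $\bfPhi\bfPhi^*+\bfPhi'(\bfPhi')^*=N\bfI_\calD$, and restricting $\bfGamma^*\bfGamma=N\bfI$ to the columns indexed by $\calE^\rmc$, giving $(\bfPhi')^*\bfPhi'+\bfPsi^*\bfPsi=N\bfI_{\calE^\rmc}$.

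To prove \eqref{eq.necessary} I would first pin down $A$. Taking traces in $\bfPhi\bfPhi^*=A\bfP$ gives $DE=\Tr(\bfPhi\bfPhi^*)=AR$, so $A=DE/R$; substituting the value of $R$ from \eqref{eq.paired diff set rank} yields $A=\frac{(D+E-1)N-DE}{N-1}$ and hence $N-A=\frac{(N-D)(N-E)}{N-1}$, which is \emph{strictly} positive exactly because $\calD\neq\calG$ and $\calE\neq\hat\calG$. Consequently $\bfPhi'(\bfPhi')^*=N\bfI_\calD-A\bfP$ has spectrum contained in $\set{N-A,N}$, so it is positive definite; thus $\bfPhi'$ has rank $D$, and since $\bfPhi'$ has only $\#(\calE^\rmc)=N-E$ columns we conclude $D\leq N-E$, i.e.\ $D+E\leq N$.

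For the characterization of when $\calD^\rmc$ and $\calE^\rmc$ are paired, note first that they are again difference sets (complements of difference sets, both nonempty precisely because $\calD\neq\calG$ and $\calE\neq\hat\calG$), so the claim is well posed and, by the same reformulation as above, amounts to $\bfPsi^*\bfPsi$ being a positive scalar multiple of a projection. I would compute the spectrum of $\bfPsi^*\bfPsi$ outright by passing from $\bfPhi\bfPhi^*$ through $\bfPhi'(\bfPhi')^*$ and $(\bfPhi')^*\bfPhi'$: the matrix $\bfPhi'(\bfPhi')^*$ has eigenvalue $N$ with multiplicity $D-R$ and $N-A$ with multiplicity $R$, so the $(N-E)\times(N-E)$ matrix $(\bfPhi')^*\bfPhi'$ has those same nonzero eigenvalues together with $0$ with multiplicity $(N-E)-D\geq 0$ (using $D+E\leq N$), whence $\bfPsi^*\bfPsi=N\bfI_{\calE^\rmc}-(\bfPhi')^*\bfPhi'$ has eigenvalue $0$ with multiplicity $D-R$, eigenvalue $A$ with multiplicity $R\geq 1$, and eigenvalue $N$ with multiplicity $N-E-D$. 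Since $0<A<N$, this matrix is a positive multiple of a projection if and only if its only positive eigenvalue is $A$, i.e.\ if and only if $N-E-D=0$; and in that case $\bfPsi^*\bfPsi$ equals $A$ times a rank-$R$ projection, so $\calD^\rmc$ and $\calE^\rmc$ are indeed paired (with the same $R$).

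The one subtle point — and the only place the hypotheses $\calD\neq\calG$ and $\calE\neq\hat\calG$ are genuinely used — is establishing $A<N$ \emph{strictly}: the weaker $A\leq N$ is automatic from positivity of $\bfPhi'(\bfPhi')^*$, but it would only yield the non-sharp bound $D-R\leq N-E$ rather than $D+E\leq N$. Once the identity $N-A=(N-D)(N-E)/(N-1)$ is recorded, the remainder is routine bookkeeping of the four blocks of $\bfGamma$ together with the elementary facts that $M\mapsto N\bfI-M$ sends eigenvalue $\lambda$ to $N-\lambda$ and that $M^*M$ and $MM^*$ have the same nonzero spectrum.
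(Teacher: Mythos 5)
Your proposal is correct and follows essentially the same route as the paper: the same four-block decomposition of the character table $\bfGamma$, the same identification $A=DE/R$ with $R$ from~\eqref{eq.paired diff set rank}, the same restrictions of $\bfGamma\bfGamma^*=N\bfI$ and $\bfGamma^*\bfGamma=N\bfI$, and the same spectral bookkeeping of $\bfGamma_{\calD\times\calE^\rmc}$ and $\bfGamma_{\calD^\rmc\times\calE^\rmc}$. The only cosmetic difference is that you record $N-A=\tfrac{(N-D)(N-E)}{N-1}>0$ directly, whereas the paper derives $A\neq N$ by contradiction from the same identity.
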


\begin{proof}
Here,
for any nonempty subsets $\calX$ and $\calY$ of $\calG$ and $\hat{\calG}$, respectively,
we denote the corresponding submatrix of the $\calG\times\hat{\calG}$ character table $\bfGamma$ of $\calG$ as $\bfGamma_{\calX\times\calY}$.
Since $\calD$ and $\calE$ are paired difference sets,
$\bfGamma_{\calD\times\calE}$ is the synthesis operator of an $\ETF(R,E)$ for its span  where $R$ is given by~\eqref{eq.paired diff set rank}.
In particular, the spectrum of $\bfGamma_{\calD\times\calE}^{}\bfGamma_{\calD\times\calE}^{*}$ consists of a tight frame constant $A>0$ and $0$ with multiplicities $R$ and $D-R$, respectively,
while the spectrum of $\bfGamma_{\calD\times\calE}^{*}\bfGamma_{\calD\times\calE}^{}$ consists of $A$ and $0$ with multiplicities $R$ and $E-R$, respectively.
Here, since every entry of $\bfGamma_{\calD\times\calE}$ has unit modulus,
$\bfGamma_{\calD\times\calE}^{*}\bfGamma_{\calD\times\calE}$ is an $E\times E$ matrix whose every diagonal entry has value $D$,
implying
$DE=\Tr(\bfGamma_{\calD\times\calE}^{*}\bfGamma_{\calD\times\calE}^{})=RA$, and so $A=\frac{DE}{R}$.
We now claim that $A\neq N$.
Indeed, otherwise~\eqref{eq.paired diff set rank} gives
\smash{$N
=A
=\tfrac{DE}{R}
=\tfrac{(D+E-1)N-DE}{N-1}$},
which implies the following contradiction of the assumption that $\calD\neq\calG$ and \smash{$\calE\neq\hat{\calG}$}:
\begin{equation*}
0=N(N-1)-[(D+E-1)N-DE]=N^2-(D+E)N+DE=(N-D)(N-E)>0.
\end{equation*}
Next note that since $\bfGamma$ is a (possibly complex) Hadamard matrix,
\begin{equation*}
N\bfI_{\calD\times\calD}
=\bfGamma_{\calD\times\hat{\calG}}^{}\bfGamma_{\calD\times\hat{\calG}}^{*}
=\bfGamma_{\calD\times\calE}^{}\bfGamma_{\calD\times\calE}^{*}
+\bfGamma_{\calD\times\calE^\rmc}^{}\bfGamma_{\calD\times\calE^\rmc}^{*},
\end{equation*}
and so the spectrum of $\bfGamma_{\calD\times\calE^\rmc}^{}\bfGamma_{\calD\times\calE^\rmc}^{*}
=N\bfI_{\calD\times\calD}-\bfGamma_{\calD\times\calE}^{}\bfGamma_{\calD\times\calE}^{*}$ consists of $N-A\neq0$ and $N$ with multiplicities $R$ and $D-R$, respectively.
In particular, $\bfGamma_{\calD\times\calE^\rmc}^{}\bfGamma_{\calD\times\calE^\rmc}^{*}$ is invertible, implying
\begin{equation*}
D
=\rank(\bfGamma_{\calD\times\calE^\rmc}^{}\bfGamma_{\calD\times\calE^\rmc}^{*})
=\rank(\bfGamma_{\calD\times\calE^\rmc}^{})
\leq\#(\calE^\rmc)
=N-E,
\end{equation*}
namely~\eqref{eq.necessary}.
For the final conclusion, note that since $D\leq N-E$,
the spectrum of
$\bfGamma_{\calD\times\calE^\rmc}^{*}\bfGamma_{\calD\times\calE^\rmc}^{}$ is obtained by padding that of $\bfGamma_{\calD\times\calE^\rmc}^{}\bfGamma_{\calD\times\calE^\rmc}^{*}$ with $N-D-E$ zeros,
and so consists of $N-A\neq0$, $N$ and $0$ with multiplicities $R$, $D-R$ and $N-D-E$, respectively.
Since
\begin{equation*}
N\bfI_{\calE^\rmc\times\calE^\rmc}
=\bfGamma_{\calG\times\calE^\rmc}^{*}\bfGamma_{\calG\times\calE^\rmc}^{}
=\bfGamma_{\calD\times\calE^\rmc}^{*}\bfGamma_{\calD\times\calE^\rmc}^{}
+\bfGamma_{\calD^\rmc\times\calE^\rmc}^{*}\bfGamma_{\calD^\rmc\times\calE^\rmc}^{},
\end{equation*}
this in turn implies that the spectrum of
$\bfGamma_{\calD^\rmc\times\calE^\rmc}^{*}\bfGamma_{\calD^\rmc\times\calE^\rmc}^{}
=N\bfI_{\calE^\rmc\times\calE^\rmc}
-\bfGamma_{\calD\times\calE^\rmc}^{*}\bfGamma_{\calD\times\calE^\rmc}^{}$ consists of $A$, $0$ and $N$ with multiplicities $R$, $D-R$ and $N-D-E$, respectively.
In particular, since $A\neq N$,
we have that $D+E=N$ if and only if
$\bfGamma_{\calD^\rmc\times\calE^\rmc}^{*}\bfGamma_{\calD^\rmc\times\calE^\rmc}^{}$ is a scalar multiple of a projection,
namely if and only if $\calD^\rmc$ and $\calE^\rmc$ are paired difference sets.
\end{proof}

When $\calD$ and $\calE$ are paired difference sets that achieve equality in~\eqref{eq.necessary}, the cardinalities of $\calD^\rmc$ and $\calE^\rmc$ equal those of $\calE$ and $\calD$, respectively.
As such, even in this case, the parameters of the ECTFF that arises via Theorem~\ref{thm.ECTFF from PDS} from $\calD^\rmc$ and $\calE^\rmc$ equal those of the ECTFF that arises via Theorem~\ref{thm.ECTFF from PDS} from $\calE$ and $\calD$ directly.
We further note that the inequality in~\eqref{eq.necessary} is sometimes strict, including for example trivial paired difference sets with $D=1<N$ and $E<N-1$.

\section{Paired difference sets from quadratic forms}

In this section we construct an infinite family of nontrivial paired difference sets.
Applying Theorem~\ref{thm.ECTFF from PDS} to them produces two infinite families of ECTFFs.
As foreshadowed by Example~\ref{ex.PDS},
the construction involves quadratic forms on finite vector spaces over the binary field $\bbF_2$.
Quadratic forms are a classical subject,
with those over fields of characteristic two being notably different than their cousins over other fields~\cite{Grove02}.
From the perspective of finite frame theory,
quadratic forms over $\bbF_2$ are already notable,
having been used to construct the maximum number of \textit{mutually unbiased bases} in real spaces whose dimension is a power of $4$~\cite{CameronS73,CalderbankCKS97}.
We review only the small part of the classical literature~\cite{Taylor92,Grove02} that we use to prove our results.

Let $\calV$ be a finite-dimensional vector space over $\bbF_2$,
which equates to a finite elementary abelian $2$-group.
A \textit{bilinear form} on $\calV$ is any function $\rmB:\calV\times\calV\rightarrow\bbF_2$ that is linear in either argument while the other is held fixed.
Such a form is \textit{nondegenerate} if having $\rmB(v_1,v_2)=0$ for all $v_1\in\calV$ implies that $v_2=0$.
Any choice of nondegenerate bilinear form $\rmB$ on $\calV$ induces an isomorphism from (the additive group of) $\calV$ to its Pontryagin dual, namely the mapping $v_2\mapsto(v_1\mapsto(-1)^{\rmB(v_1,v_2)})$.
(This mapping is a well-defined homomorphism since $\rmB$ is bilinear,
and is injective since $\rmB$ is nondegenerate.)
Under this identification, the character table $\bfGamma$ of $\calV$ becomes the $(\calV\times\calV)$-indexed real Hadamard matrix with entries $\bfGamma(v_1,v_2)=(-1)^{\rmB(v_1,v_2)}$.
A bilinear form on $\calV$ is \textit{alternating} if $\rmB(v,v)=0$ for all $v\in\calV$,
and is \textit{symmetric} if $\rmB(v_1,v_2)=\rmB(v_2,v_1)$ for all $v_1,v_2\in\calV$.
Here, every alternating form is symmetric since
$0=\rmB(v_1+v_2,v_1+v_2)=0+\rmB(v_1,v_2)+B(v_2,v_2)+0$.

A \textit{symplectic form} on $\calV$ is a nondegenerate alternating bilinear form on $\calV$.
For such forms,
$\bfGamma(v_1,v_2)=(-1)^{\rmB(v_1,v_2)}$ defines a real symmetric Hadamard matrix whose diagonal entries have value $1$.
In particular, letting $V=\#(\calV)$ we have $\bfGamma^2=\bfGamma^*\bfGamma=V\bfI$ and $\Tr(\bfGamma)=V$,
implying that $\bfGamma$ has eigenvalues $\sqrt{V}$ and $-\sqrt{V}$ with multiplicities $\frac12(V+\sqrt{V})$ and $\frac12(V-\sqrt{V})$, respectively.
In particular, a symplectic form on $\calV$ can only exist if $\sqrt{V}$ is an integer,
namely only if the dimension of $\calV$ over $\bbF_2$ is even.
In such cases, $\sqrt{V}\bfI+\bfGamma$ is the Gram matrix of a real $\ETF(\frac12(V+\sqrt{V}),V)$.
Such \textit{symplectic ETFs} are well known,
with various special cases and generalizations of them and their Naimark complements appearing in the literature in numerous guises,
including Theorem~5.4 of~\cite{BodmannE10},
Theorem~4.1 of~\cite{CoutinkhoGSZ16} (when applied to the \textit{Thas--Somma} construction of \textit{distance-regular antipodal covers of complete graphs} (DRACKNs)),
Theorem~5.1 of~\cite{FickusJMPW19},
Theorem~6.4 of~\cite{IversonJM16},
and Theorem~4.11 of~\cite{BodmannK20};
see Example 6.10 of~\cite{IversonM20} for more discussion.

\begin{example}
\label{ex.symplectic form}
For any positive integer $M$ and  $\bfx=(x_1,\dotsc,x_{2M}),\,\bfy=(y_1,\dotsc,y_{2M})\in\bbF_2^{2M}$ let
\begin{equation}
\label{eq.canonical symplectic}
\rmB(\bfx,\bfy)
:=\sum_{m=1}^M (x_{2m-1}y_{2m}+x_{2m}y_{2m-1})
=(x_1y_2+x_2y_1)+\dotsb+(x_{2M-1}y_{2M}+x_{2M}y_{2M-1}).
\end{equation}
That is, $\rmB(\bfx,\bfy)=\bfx^\rmT\bfB\bfy$ where $\bfB$ is the $2M\times 2M$ block diagonal matrix over $\bbF_2$ whose $M$ diagonal blocks are all $[\begin{smallmatrix}0&1\\1&0\end{smallmatrix}]$.
This is clearly a bilinear form on $\bbF_2^{2M}$.
It is moreover alternating
(since every summand of~\eqref{eq.canonical symplectic} is zero when $\bfx=\bfy$)
and nondegenerate (since if $\bfB(\bfx,\bfy)=0$ for all members $\bfx$ of the standard basis then $\bfy=0$).
It is thus a symplectic form on $\bbF_2^{2M}$.
Provided we order the members of $\bbF_2^{2M}$ lexicographically,
the corresponding character table $\bfGamma$ can be formed by tensoring together $M$ copies of
\begin{equation*}
\left[\begin{smallmatrix}
+&+&+&+\\
+&+&-&-\\
+&-&+&-\\
+&-&-&+
\end{smallmatrix}\right].
\end{equation*}
Taking $M=2$ for instance yields the symplectic form $\rmB$ of Example~\ref{ex.PDS} and the resulting character table $\bfGamma$ of~\eqref{eq.16 x 16 Gamma}.
Though not necessary for our work below,
it is known that up to isomorphism,
this is the only symplectic form on a vector space $\calV$ over $\bbF_2$ of dimension $2M$~\cite{Grove02}.
The binary matrices that preserve the form given in~\eqref{eq.canonical symplectic} form the classical \textit{symplectic group} $\mathrm{Sp}(2M,2)$.
\end{example}

A \textit{quadratic form} on $\calV$ is any function $\rmQ:\calV\rightarrow\bbF_2$ such that
\begin{equation}
\label{eq.polarization}
\rmB(v_1,v_2):=\rmQ(v_1+v_2)+\rmQ(v_1)+\rmQ(v_2)
\end{equation}
defines a bilinear form $\rmB$ on $\calV$.
Any such bilinear form $\rmB$ is necessarily alternating since
$\rmB(v,v)=\rmQ(v+v)+\rmQ(v)+\rmQ(v)=\rmQ(0)$ for any $v\in\calV$ where,
as a special case of this, $\rmQ(0)=\rmB(0,0)=0$.
A point $v\in\calV$ is \textit{singular} with respect to a given quadratic form $\rmQ$ if $\rmQ(v)=0$ and is otherwise \textit{nonsingular}.
The \textit{quadric} of $\rmQ$ is the set $\calD=\set{v\in\calV: \rmQ(v)=0}$ of its singular vectors, which necessarily includes $v=0$.
Remarkably, the quadratic form $\rmQ$ that gives rise to a particular bilinear form $\rmB$ is not unique in general.
For example,
for any $v_0\in\calV$, the function $\tilde{\rmQ}:\calV\rightarrow\bbF_2$, $\tilde{\rmQ}(v):=\rmQ(v+v_0)+\rmQ(v_0)=\rmB(v,v_0)+\rmQ(v)$ also satisfies~\eqref{eq.polarization},
since for any $v_1,v_2\in\calV$,
\begin{align*}
\tilde{\rmQ}(v_1+v_2)+\tilde{\rmQ}(v_1)+\tilde{\rmQ}(v_2)
&=\rmB(v_1+v_2,v_0)+\rmQ(v_1+v_2)+\rmB(v_1,v_0)+\rmQ(v_1)+\rmB(v_2,v_0)+\rmQ(v_2)\\
&=\rmQ(v_1+v_2)+\rmQ(v_1)+\rmQ(v_2).
\end{align*}
The quadric $\tilde{\calD}$ of $\tilde{\rmQ}$ is the corresponding shift of either the quadric $\calD$ of $\rmQ$ or its complement,
depending on whether or not $v_0$ is singular:
\begin{equation*}
\tilde{\calD}
=\set{v\in\calV: \tilde{\rmQ}(v)=0}
=\set{v\in\calV: \rmQ(v+v_0)=\rmQ(v_0)}
=\left\{\begin{array}{ll}
v_0+\calD,     &\ v_0\in\calD,\\
v_0+\calD^\rmc,&\ v_0\in\calD^\rmc.
\end{array}\right.
\end{equation*}

We focus on quadratic forms $\rmQ:\calV\rightarrow\bbF_2$ that are \textit{nondefective}, that is, for which the bilinear form~\eqref{eq.polarization} is nondegenerate, and so symplectic.
Here, $V=\#(\calV)=2^{2M}$ for some positive integer $M$.
Such forms are classical, and we now summarize and explain the parts of their folklore that we shall make use of:

\begin{lemma}
\label{lem.chirp facts}
Let $\calV$ be a vector space over $\bbF_2$ of cardinality $2^{2M}$.
Let $\rmQ:\calV\rightarrow\bbF_2$ be a nondefective quadratic form with associated symplectic form~\eqref{eq.polarization} and sign
\begin{equation}
\label{eq.sign of quadratic form}
\sgn(\rmQ):=\frac1{2^M}\sum_{v\in\calV}(-1)^{\rmQ(v)}\in\set{1,-1}.
\end{equation}
(If the opposite sign is desired, replace $\rmQ$ with $\tilde{\rmQ}(v):=\rmQ(v+v_0)+1$ where $Q(v_0)=1$.)
Then the $(\calV\times\calV)$-indexed matrices $\bfGamma$, $\bfC$ and $\bfDelta$ defined by
\begin{equation*}
\bfGamma(v_1,v_2)=(-1)^{\rmB(v_1,v_2)},
\quad
\bfC(v_1,v_2)=(-1)^{\rmQ(v_1+v_2)},
\quad
\bfDelta(v_1,v_2)
=\left\{\begin{array}{cl}
(-1)^{\rmQ(v_1)},&\ v_1=v_2,\\
0,&\ v_1\neq v_2,
\end{array}\right.
\end{equation*}
are real-symmetric, and satisfy
\begin{equation}
\label{eq.chirp Fourier}
\bfGamma^2=2^{2M}\bfI=\bfC^2,
\quad
\bfDelta^2=\bfI,
\quad
\bfGamma=\bfDelta\bfC\bfDelta,
\quad
(\bfGamma\bfDelta)^3=2^{3M}\sgn(\rmQ)\bfI.
\end{equation}
Moreover, $\calD=\set{v\in\calV: \rmQ(v)=0}$ is a difference set for $\calV$ of cardinality $2^{M-1}[2^M+\sgn(\rmQ)]$ whose harmonic ETF
$\set{\bfphi_v}_{v\in\calV}$,
\smash{$\bfphi_v(d):=(-1)^{\rmB(d,v)}$} has Gram matrix $2^{M-1}[2^M\bfI+\sgn(\rmQ)\bfC]$.
\end{lemma}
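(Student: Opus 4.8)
The plan is to reduce the whole lemma to the polarization identity~\eqref{eq.polarization} together with two ``Gauss sum'' evaluations, after which all of~\eqref{eq.chirp Fourier} and the difference-set claim fall out by routine algebra. Symmetry of $\bfGamma$, $\bfC$ and $\bfDelta$ is immediate: $\rmB$ is alternating, hence symmetric; the exponent $\rmQ(v_1+v_2)$ of $\bfC$ is symmetric in its arguments; and $\bfDelta$ is diagonal with real entries. The first workhorse comes from reading~\eqref{eq.polarization} as $\rmQ(v)+\rmB(v,u)=\rmQ(v+u)+\rmQ(u)$ (recall $\rmQ(0)=0$) and summing over $v\in\calV$: for every $u\in\calV$,
\[
\sum_{v\in\calV}(-1)^{\rmQ(v)+\rmB(v,u)}
=(-1)^{\rmQ(u)}\sum_{w\in\calV}(-1)^{\rmQ(w)}
=2^M\sgn(\rmQ)\,(-1)^{\rmQ(u)}.
\]
The second is the character orthogonality $\sum_{v\in\calV}(-1)^{\rmB(v,u)}=2^{2M}$ if $u=0$ and $0$ otherwise, valid because when $u\neq0$ nondegeneracy of $\rmB$ makes $v\mapsto\rmB(v,u)$ a nonzero, hence balanced, linear functional; together with $\rmB(v_1,v_2)+\rmB(v_2,v_3)=\rmB(v_1+v_3,v_2)$ this is exactly $\bfGamma^2=2^{2M}\bfI$.

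I would next record the two easy matrix identities. First, $\bfDelta^2=\bfI$ is trivial, and $\bfGamma=\bfDelta\bfC\bfDelta$ is just~\eqref{eq.polarization} read entrywise, $(-1)^{\rmQ(v_1)}(-1)^{\rmQ(v_1+v_2)}(-1)^{\rmQ(v_2)}=(-1)^{\rmB(v_1,v_2)}$; hence $\bfC=\bfDelta\bfGamma\bfDelta$ and $\bfC^2=\bfDelta\bfGamma^2\bfDelta=2^{2M}\bfI$ with no further work. To see that $\sgn(\rmQ)\in\set{1,-1}$, apply the first workhorse with the matrix $\bfGamma$: the vector $\bfz\in\bbR^\calV$ given by $\bfz(w):=(-1)^{\rmQ(w)}$ satisfies $\bfGamma\bfz=2^M\sgn(\rmQ)\,\bfz$, and since $\bfGamma$ is symmetric with $\bfGamma^2=2^{2M}\bfI$ its eigenvalues lie in $\set{2^M,-2^M}$; as $\bfz\neq\bfzero$ this forces $\sgn(\rmQ)=\pm1$. (The parenthetical claim in the lemma is equally quick, since $\tilde{\rmQ}(v):=\rmQ(v+v_0)+1$ has the same associated bilinear form and $\sum_v(-1)^{\tilde{\rmQ}(v)}=-\sum_v(-1)^{\rmQ(v)}$ when $\rmQ(v_0)=1$.)

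For the last relation in~\eqref{eq.chirp Fourier} the crucial sub-step is $\bfC\bfDelta\bfC=2^M\sgn(\rmQ)\,\bfGamma$. Expanding its $(v_1,v_3)$ entry as $\sum_{v_2}(-1)^{\rmQ(v_1+v_2)+\rmQ(v_2)+\rmQ(v_2+v_3)}$ and applying~\eqref{eq.polarization} twice to collapse the exponent to $\rmQ(v_1)+\rmQ(v_3)+\rmQ(v_2)+\rmB(v_1+v_3,v_2)$ modulo $2$, the first workhorse (with $u=v_1+v_3$) evaluates the inner sum and yields $2^M\sgn(\rmQ)(-1)^{\rmB(v_1,v_3)}$. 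Since $\bfGamma\bfDelta=\bfDelta\bfC\bfDelta^2=\bfDelta\bfC$ and $\bfDelta\bfGamma\bfDelta=\bfC$, it follows that
\[
(\bfGamma\bfDelta)^3
=\bfDelta\,(\bfC\bfDelta\bfC)\,\bfDelta\bfC
=2^M\sgn(\rmQ)\,\bfDelta\bfGamma\bfDelta\,\bfC
=2^M\sgn(\rmQ)\,\bfC^2
=2^{3M}\sgn(\rmQ)\,\bfI.
\]

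Finally, $\calD=\set{v\in\calV:\rmQ(v)=0}$ has $\#\calD=\tfrac12\bigparen{2^{2M}+\sum_{v}(-1)^{\rmQ(v)}}=2^{M-1}[2^M+\sgn(\rmQ)]$. Under the identification of $\calV$ with $\hat{\calV}$ via $v\mapsto\bigparen{u\mapsto(-1)^{\rmB(u,v)}}$, the synthesis operator of $\set{\bfphi_v}_{v\in\calV}$ is the $(\calD\times\calV)$-submatrix $\bfPhi$ of $\bfGamma$, and writing the sum over $\calD$ as $\tfrac12\sum_{d\in\calV}\bigparen{1+(-1)^{\rmQ(d)}}$ gives
\[
(\bfPhi^*\bfPhi)(v_1,v_2)=\sum_{d\in\calD}(-1)^{\rmB(d,\,v_1+v_2)},
\]
which by character orthogonality and the first workhorse equals $\#\calD$ when $v_1=v_2$ and $2^{M-1}\sgn(\rmQ)(-1)^{\rmQ(v_1+v_2)}$ when $v_1\neq v_2$. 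Hence the diagonal of $\bfPhi^*\bfPhi$ is $\#\calD$ while every off-diagonal entry has modulus $2^{M-1}$, so $\set{\bfphi_v}_{v\in\calV}$ is equiangular; being also equal-norm and, like every harmonic frame, tight, it is an ETF, and therefore $\calD$ is a difference set by the characterization recalled in Section~2. Since $(-1)^{\rmQ(v_1+v_2)}=\bfC(v_1,v_2)$ and $\bfC(v,v)=1$, its Gram matrix is $2^{M-1}[2^M\bfI+\sgn(\rmQ)\bfC]$, as claimed. The one genuinely nonroutine point I anticipate is the entrywise evaluation of $\bfC\bfDelta\bfC$ in the third paragraph --- organizing the triple exponent and recognizing the Gauss sum inside it --- along with being careful that the ``$\in\set{1,-1}$'' built into the definition of $\sgn(\rmQ)$ really needs the eigenvalue argument (equivalently, $\bigabs{\sum_v(-1)^{\rmQ(v)}}^2=2^{2M}$); everything else is formal manipulation of the three matrices $\bfGamma,\bfC,\bfDelta$.
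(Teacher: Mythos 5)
Your proposal is correct and follows essentially the same route as the paper: the chirp eigenvector relation $\sum_v(-1)^{\rmQ(v)+\rmB(v,u)}=2^M\sgn(\rmQ)(-1)^{\rmQ(u)}$ (your ``first workhorse'') is exactly the paper's Gauss-sum/eigenvalue argument for $\sgn(\rmQ)\in\set{1,-1}$, the identity $\bfGamma=\bfDelta\bfC\bfDelta$ comes from the same entrywise polarization, and the difference-set and Gram-matrix claims follow from the same evaluation of $\bfGamma\bfchi_\calD$. The only cosmetic difference is that you establish $(\bfGamma\bfDelta)^3=2^{3M}\sgn(\rmQ)\bfI$ by computing $\bfC\bfDelta\bfC=2^M\sgn(\rmQ)\bfGamma$ entrywise, whereas the paper first obtains $\bfGamma\bfC=2^M\sgn(\rmQ)\bfDelta\bfGamma$ from the convolution theorem and records your identity as a corollary at the end; both rest on the same Gauss sum, and your version checks out.
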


\begin{proof}
Consider the \textit{(quadratic) chirp} function $\bfc\in\bbR^{\calV}$, $\bfc(v):=(-1)^{\rmQ(v)}$.
This is an eigenvector of the DFT matrix $\bfGamma^*=\bfGamma$: for any $v_1\in\calV$,
using~\eqref{eq.polarization} and making the substitution $v=v_1+v_2$ gives
\begin{equation}
\label{eq.chirp is eigenvector}
(\bfGamma\bfc)(v_1)
=\sum_{v_2\in\calV}(-1)^{\rmB(v_1,v_2)}(-1)^{\rmQ(v_2)}
=\sum_{v_2\in\calV}(-1)^{\rmQ(v_1+v_2)+\rmQ(v_1)}
=\biggparen{\,\sum_{v\in\calV}(-1)^{\rmQ(v)}}\bfc(v_1).
\end{equation}
Hence the ``Gauss sum" $\sum_{v\in\calV}(-1)^{\rmQ(v)}$ is an eigenvalue of $\bfGamma$, and so is either $\sqrt{V}=2^M$ or $-\sqrt{V}=-2^M$.
In particular, the \textit{sign}~\eqref{eq.sign of quadratic form} of $\rmQ$ is indeed either $1$ or $-1$.
We caution that the sign of a quadratic form $\rmQ$ that gives rise to a particular symplectic form $\rmB$ is determined by $\rmQ$ but not by $\rmB$: for example, $\sgn(\tilde{\rmQ})=(-1)^{\rmQ(v_0)}\sgn(\rmQ)$ where $\tilde{\rmQ}(v):=\rmQ(v+v_0)+\rmQ(v_0)$.
A nondefective quadratic form $\rmQ$ is called \textit{hyperbolic} when $\sgn(\rmQ)=1$ and called \textit{elliptic} when $\sgn(\rmQ)=-1$.

Under this notation, \eqref{eq.chirp is eigenvector} becomes $\bfGamma\bfc=2^M\,\sgn(\rmQ)\bfc$.
Since $\bfc=2\bfchi_\calD-\bfone$ where $\calD$ is the quadric of $\rmQ$,
this can be restated as
$2\bfGamma\bfchi_\calD-2^{2M}\bfdelta_0
=\bfGamma(2\bfchi_\calD-\bfone)
=2^M\,\sgn(\rmQ)(2\bfchi_\calD-\bfone)$,
that is,
\begin{equation}
\label{eq.DFT derivation}
\bfGamma\bfchi_\calD
=2^{M-1}[2^M\bfdelta_0+\sgn(\rmQ)\bfc]
=2^{M-1}[2^M\bfdelta_0+\sgn(\rmQ)(2\bfchi_\calD-\bfone)].
\end{equation}
When evaluated at $v=0$, this gives
$\#(\calD)=(\bfGamma\bfchi_\calD)(0)=2^{M-1}[2^M+\sgn(\rmQ)]$.
When instead evaluated at any $v\neq 0$, \eqref{eq.DFT derivation} gives
$\abs{(\bfGamma\bfchi_\calD)(v)}=2^{M-1}$.
As such, $\calD$ is a difference set for $\calV$.
(From this perspective, \eqref{eq.DFT derivation} itself is remarkable: for many difference sets $\calD$,
no simple expression for the phase of the DFT of $\bfchi_\calD$ is known.)
In particular, \eqref{eq.DFT derivation} implies that the corresponding harmonic ETF $\set{\bfphi_v}_{v\in\calV}$ for $\bbR^\calD$,
\smash{$\bfphi_v(d):=(-1)^{\rmB(d,v)}$}, satisfies
\begin{equation*}
\ip{\bfphi_{v_1}}{\bfphi_{v_2}}
=\sum_{d\in\calD}(-1)^{\rmB(d,v_1+v_2)}
=(\bfGamma\bfchi_\calD)(v_1+v_2)
=2^{M-1}\left\{\begin{array}{cl}
2^M+\sgn(\rmQ),&\ v_1=v_2,\\
\sgn(\rmQ)\bfc(v_1+v_2),&\ v_1\neq v_2.
\end{array}\right.
\end{equation*}
That is, $\set{\bfphi_v}_{v\in\calV}$ has Gram matrix $\bfPhi^*\bfPhi=2^{M-1}[2^M\bfI+\sgn(\rmQ)\bfC]$,
where $\bfC$ is the ($\calV$-circulant) filter defined by $\bfC\bfx:=\bfc*\bfx$,
namely where $\bfC(v_1,v_2)=\bfc(v_1+v_2)=(-1)^{\rmQ(v_1+v_2)}$.
Like all filters over $\calV$, this matrix $\bfC$ is diagonalized by the DFT $\bfGamma$:
for any $\bfx\in\bbR^\calV$, $v\in\calV$,
\begin{equation*}
(\bfGamma\bfC\bfx)(v)
=[\bfGamma(\bfc*\bfx)](v)
=(\bfGamma\bfc)(v)(\bfGamma\bfx)(v)
=2^M\,\sgn(\rmQ)\bfc(v)(\bfGamma\bfx)(v)
=2^M\,\sgn(\rmQ)(\bfDelta\bfGamma\bfx)(v),
\end{equation*}
where $\bfDelta:\bbR^\calV\rightarrow\bbR^\calV$,
$(\bfDelta\bfx)(v):=\bfc(v)\bfx(v)$ is the \textit{chirp modulation} operator,
namely the diagonal $(\calV\times\calV)$-indexed orthogonal matrix $\bfDelta$ whose $v$th diagonal entry is $\bfDelta(v,v)=\bfc(v)=(-1)^{\rmQ(v)}$.
That is, $\bfC=2^{-M}\sgn(\rmQ)\bfGamma\bfDelta\bfGamma$.
It is remarkable however that $\bfGamma$ and $\bfC$ are also related by conjugation by $\bfDelta$:
for any $v_1,v_2\in\calV$, \eqref{eq.polarization} gives
\begin{equation*}
(\bfDelta\bfC\bfDelta)(v_1,v_2)
=(-1)^{\rmQ(v_1)}(-1)^{\rmQ(v_1+v_2)}(-1)^{\rmQ(v_2)}
=(-1)^{\rmB(v_1,v_2)}
=\bfGamma(v_1,v_2),
\end{equation*}
and so $\bfGamma=\bfDelta\bfC\bfDelta$.
In particular, $\bfC=\bfDelta\bfGamma\bfDelta$ (like $\bfGamma$) is a real-symmetric Hadamard matrix whose diagonal entries have value $1$.
(We caution that $\bfGamma$ and $\bfC$ are distinct:
$\bfC$ is $\calV$-circulant whereas $\bfGamma$ is not, with the latter having an all-ones first column.)
Moreover, combining the above facts gives
\smash{$\bfDelta\bfGamma\bfDelta=\bfC=2^{-M}\sgn(\rmQ)\bfGamma\bfDelta\bfGamma$},
namely that the \textit{Fourier-chirp} transform $\bfGamma\bfDelta$ satisfies $(\bfGamma\bfDelta)^3=2^{3M}\sgn(\rmQ)\bfI$.
Analogous transforms arise in the study of \textit{SIC-POVMs}; see~Section~3.4 of~\cite{Zauner99}, and~\cite{Fickus09}.
Interestingly, this implies that the shifts of $\bfc$ form an equal-norm orthogonal basis of eigenvectors of the DFT,
that is, $\bfGamma$ and $\bfC$ orthogonally diagonalize each other:
\begin{equation*}
\bfGamma
=2^{-3M}\sgn(\rmQ)\bfGamma(\bfGamma\bfDelta)^3
=2^{-M}\sgn(\rmQ)(\bfDelta\bfGamma\bfDelta)\bfDelta(\bfDelta\bfGamma\bfDelta)
=2^{-M}\sgn(\rmQ)\bfC\bfDelta\bfC.\qedhere
\end{equation*}
\end{proof}

\begin{example}
\label{ex.quadratic forms}
Continuing Example~\ref{ex.symplectic form}, for any positive integer $M$,
the bilinear form $\rmB$ on $\calV=\bbF_2^M$ given in~\eqref{eq.canonical symplectic} arises via~\eqref{eq.polarization}, for example, from the quadratic form $\rmQ:\bbF_2^{2M}\rightarrow\bbF_2$,
\begin{equation}
\label{eq.canonical hyperbolic}
\rmQ(\bfx)
=\rmQ(x_1,\dotsc,x_{2M})
:=\sum_{m=1}^M x_{2m-1}x_{2m}
=x_1x_2+\dotsc+x_{2M-1}x_{2M}.
\end{equation}
Since $\rmB$ is nondegenerate (symplectic),
$\rmQ$ is nondefective.
When $M=1$, $\rmQ(x_1,x_2)=x_1x_2$ has three singular vectors and one nonsingular one: its quadric is $\calD=\set{00,01,10}$, and so $\calD^\rmc=\set{11}$.
For $M>1$, a vector in $\bbF_2^{2M}$ is singular if and only if it is obtained by either appending $00$, $10$ or $01$ to a singular vector in $\bbF_2^{\smash{2M-2}}$ or appending $11$ to a nonsingular one.
By induction, this implies that~\eqref{eq.canonical hyperbolic} has exactly
$\#(\calD)=2^{M-1}(2^M+1)$ singular vectors, and so is hyperbolic.
For an elliptic quadratic form $\tilde{\rmQ}$ that yields the same symplectic form $\rmB$, we can for example let $\tilde{\rmQ}(\bfx):=\rmQ(\bfx+\bfx_0)+1$ where $\rmQ(\bfx_0)=1$.
Taking $\bfx_0$ to be $00\dotsb0011$ for instance yields
\begin{equation}
\label{eq.canonical elliptic}
\tilde{\rmQ}(\bfx)
:=\sum_{m=1}^M x_{2m-1}x_{2m}+x_{2M-1}^2+x_{2M}^2
=x_1x_2+\dotsc+x_{2M-1}x_{2M}+x_{2M-1}^2+x_{2M}^2.
\end{equation}
When $M=2$ this becomes the elliptic quadratic form $x_1x_2+x_3x_4+x_3^2+x_4^2$ used in Example~\ref{ex.PDS} whose $6$-element quadric $\calD$ is given in~\eqref{eq.PDS(16,6,10)}.
(Adding $0011$ to these vectors gives the nonsingular vectors of the hyperbolic quadratic form $x_1x_2+x_3x_4$.)
The corresponding chirp $\bfc$ has values
$+---+---+----+++$.
Conjugating $\bfGamma$ of~\eqref{eq.16 x 16 Gamma} by the diagonal matrix $\bfDelta$ gives $\bfC=\bfDelta\bfGamma\bfDelta$ (the unique $\bbF_2^{2M}$-circulant matrix that has $\bfc$ as its first column).
This matrix $\bfC$ naturally arises in the Gram matrix $\bfGamma_0^*\bfGamma_0^{}=2(4\bfI-\bfC)=2\bfDelta(4\bfI-\bfGamma)\bfDelta$ of the corresponding harmonic $\ETF(6,16)$ whose synthesis operator $\bfGamma_0$ is given in~\eqref{eq.16 x 16 Gamma}.
Here, $\bfGamma\bfDelta$ is a real Hadamard matrix with the remarkable property that $(\bfGamma\bfDelta)^3=-64\bfI$.
In the next result we use this Fourier-chirp relation to prove that $\calD$ and $\calD^\rmc$ are paired difference sets.

Though not necessary for our work below,
it is known that up to isomorphism,
\eqref{eq.canonical hyperbolic} and \eqref{eq.canonical elliptic} are the only hyperbolic and elliptic nondefective quadratic forms on a vector space $\calV$ over $\bbF_2$ of dimension $2M$~\cite{Grove02}.
The binary matrices that preserve the form given in~\eqref{eq.canonical hyperbolic} or~\eqref{eq.canonical elliptic} form the classical \textit{orthogonal groups} $\mathrm{O}^+(2M,2)$ and $\mathrm{O}^{-}(2M,2)$, respectively.
\end{example}

\begin{theorem}
\label{thm.PDS from quadratic}
Let $\rmQ$ be a nondefective quadratic form on a vector space $\calV$ over $\bbF_2$,
and let $\rmB$ be the associated symplectic form~\eqref{eq.polarization}.
Then the set $\calD=\set{v\in\calV: \rmQ(v)=0}$ of all singular vectors of $\rmQ$ is a difference set for $\calV$ that is paired with $\calD^\rmc$ in the sense of Definition~\ref{def.paired difference sets}, provided we identify (the additive group of) $\calV$ with its Pontryagin dual via the isomorphism $v_2\mapsto(v_1\mapsto(-1)^{\rmB(v_1,v_2)})$.
\end{theorem}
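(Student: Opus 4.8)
The plan is to verify directly that $\calD$ and $\calD^\rmc$ are paired in the sense of Definition~\ref{def.paired difference sets}. That $\calD$ is a difference set is already asserted in Lemma~\ref{lem.chirp facts}, and then $\calD^\rmc$ is a difference set as well, being the complement of one (note $0\in\calD$, while $\#(\calD)=2^{M-1}[2^M+\sgn(\rmQ)]<2^{2M}$, so neither $\calD$ nor $\calD^\rmc$ is empty). Under the stated identification of $\calV$ with its Pontryagin dual, the character indexed by $v\in\calV$ sends $d$ to $(-1)^{\rmB(d,v)}$, so the collection $\set{\bfphi_v}_{v\in\calD^\rmc}\subseteq\bbC^\calD$, $\bfphi_v(d):=(-1)^{\rmB(d,v)}$, has synthesis operator equal to the $(\calD\times\calD^\rmc)$-indexed submatrix $\bfPhi:=\bfGamma_{\calD\times\calD^\rmc}$ of the character table $\bfGamma$ of $\calV$. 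Pairing amounts to showing $\bfPhi\bfPhi^*\bfPhi=A\bfPhi$ for some $A>0$. The two tools I would use are the closed form $\bfGamma\bfchi_\calD=2^{M-1}[2^M\bfdelta_0+\sgn(\rmQ)\bfc]$ from~\eqref{eq.DFT derivation} (the genuinely special property of quadrics), where $\bfc(v)=(-1)^{\rmQ(v)}$, and the polarization identity~\eqref{eq.polarization}, used to track whether sums of singular and/or nonsingular vectors are singular.

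First I would compute the frame operator. Its $(d_1,d_2)$ entry is $\sum_{v\in\calD^\rmc}(-1)^{\rmB(d_1+d_2,v)}$; splitting $\sum_{v\in\calD^\rmc}=\sum_{v\in\calV}-\sum_{v\in\calD}$ and using that the full sum is $2^{2M}$ or $0$ according as $d_1=d_2$ or not (nondegeneracy of $\rmB$) while the sum over $\calD$ is $(\bfGamma\bfchi_\calD)(d_1+d_2)$, together with $\rmQ(d_1+d_2)=\rmB(d_1,d_2)$ for $d_1,d_2\in\calD$ (from~\eqref{eq.polarization}, both vectors singular), one obtains
\begin{equation*}
\bfPhi\bfPhi^*=2^{2M-1}\bfI_\calD-2^{M-1}\sgn(\rmQ)\,\bfGamma_{\calD\times\calD},
\end{equation*}
where $\bfGamma_{\calD\times\calD}$ is the principal $(\calD\times\calD)$ submatrix of $\bfGamma$. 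Next I would observe that this submatrix scales every column of $\bfPhi$ by the same factor: for $d_1\in\calD$, $v_2\in\calD^\rmc$ the $(d_1,v_2)$ entry of $\bfGamma_{\calD\times\calD}\bfPhi$ is again $(\bfGamma\bfchi_\calD)(d_1+v_2)$, and here $d_1+v_2\neq0$ (one vector is singular, the other is not) while $\rmQ(d_1+v_2)=\rmB(d_1,v_2)+1$, so~\eqref{eq.DFT derivation} makes this entry $-2^{M-1}\sgn(\rmQ)(-1)^{\rmB(d_1,v_2)}$; that is, $\bfGamma_{\calD\times\calD}\bfPhi=-2^{M-1}\sgn(\rmQ)\,\bfPhi$. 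Combining the last two identities and using $\sgn(\rmQ)^2=1$,
\begin{equation*}
\bfPhi\bfPhi^*\bfPhi
=2^{2M-1}\bfPhi+2^{2M-2}\bfPhi
=3\cdot2^{2M-2}\,\bfPhi,
\end{equation*}
so $\set{\bfphi_v}_{v\in\calD^\rmc}$ is a tight frame for its span with constant $A=3\cdot2^{2M-2}>0$, i.e.\ $\calD$ is paired with $\calD^\rmc$. (As a sanity check: since $D+E=2^{2M}$ here, \eqref{eq.paired diff set rank} gives rank $R=\tfrac13(2^{2M}-1)$, matching Section~1, and $A=12$ when $M=2$, matching Example~\ref{ex.PDS}.)

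The only step that is more than bookkeeping is recognizing that the frame operator of this particular sub-collection is, up to a scalar, literally a principal submatrix $\bfGamma_{\calD\times\calD}$ of the character table, and that $\bfGamma_{\calD\times\calD}$ acts as a scalar on $\bfPhi$ --- which collapses the global condition ``$\bfPhi$ is a tight frame for its span'' into the one-line eigenvector identity $\bfGamma_{\calD\times\calD}\bfPhi=-2^{M-1}\sgn(\rmQ)\bfPhi$. Both displayed identities hinge on the closed form in~\eqref{eq.DFT derivation} for the Fourier transform of $\bfchi_\calD$, together with the polarization identity~\eqref{eq.polarization}. I expect the only real hazard to be clerical: keeping the three index ranges $\calV$, $\calD$, $\calD^\rmc$ and the $\sgn(\rmQ)$ signs straight. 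Working on the $\calD$-side with $\bfPhi\bfPhi^*$ (rather than the $\calD^\rmc$-side with $\bfPhi^*\bfPhi=2^{M-1}[2^M\bfI+\sgn(\rmQ)\bfC_{\calD^\rmc\times\calD^\rmc}]$, which would instead require establishing a quadratic minimal polynomial for the circulant block $\bfC_{\calD^\rmc\times\calD^\rmc}$) is what keeps things clean, because that block equals a submatrix of $\bfGamma$ exactly, with no circulant $\bfC$ surviving.
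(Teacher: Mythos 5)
Your proof is correct; every identity checks out (including the constant $A=3\cdot 2^{2M-2}$, which agrees with the paper's $\bfG^2=3(2^{M-1})\bfG$ after accounting for the factor $2^{M-1}$ by which the actual Gram matrix exceeds $\bfG$, and with $A=12$ when $M=2$). Your route is genuinely different in execution from the paper's, though both ultimately rest on the same two inputs: the chirp-eigenvector fact $\bfGamma\bfchi_\calD=2^{M-1}[2^M\bfdelta_0+\sgn(\rmQ)\bfc]$ and polarization. The paper stays on the $\calD^\rmc$ side, writes the relevant Gram block as $\tfrac12(\bfI-\bfDelta)[2^M\bfI+\sgn(\rmQ)\bfC]\tfrac12(\bfI-\bfDelta)$ using full-size $\calV\times\calV$ matrices, and verifies $\bfG^2=A\bfG$ by squaring and invoking the operator identity $(\bfGamma\bfDelta)^3=2^{3M}\sgn(\rmQ)\bfI$ (equivalently $\bfGamma\bfDelta\bfGamma=2^M\sgn(\rmQ)\bfDelta\bfGamma\bfDelta$); this keeps everything as square-matrix algebra with the idempotents $\tfrac12(\bfI\pm\bfDelta)$ and situates the argument within the Fourier--chirp framework the authors develop for other purposes. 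You instead work on the $\calD$ side, compute the frame operator entrywise to get $\bfPhi\bfPhi^*=2^{2M-1}\bfI-2^{M-1}\sgn(\rmQ)\bfGamma_{\calD\times\calD}$ (which is in fact the $(\calD,\calD)$ block of the paper's identity $\bfGamma\tfrac12(\bfI-\bfDelta)\bfGamma=2^{2M-1}\bfI-2^{M-1}\sgn(\rmQ)\bfC$, since $\bfC$ and $\bfGamma$ agree on $\calD\times\calD$), and then close the argument with the separate observation that $\bfGamma_{\calD\times\calD}\bfPhi=-2^{M-1}\sgn(\rmQ)\bfPhi$, so that $\bfPhi\bfPhi^*\bfPhi=A\bfPhi$ drops out in one line. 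Your version is more elementary and arguably more transparent about \emph{why} tightness holds (the quadric's character sums take only the two values $\pm 2^{M-1}$ off the origin, governed entirely by $\rmQ$ of the sum, which polarization converts to $\rmB$); the paper's version avoids all submatrix bookkeeping and exposes the cube-root-of-identity structure of $\bfGamma\bfDelta$ that connects to Zauner's work. Both are complete proofs.
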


\begin{proof}
Recall the notation and facts of Lemma~\ref{lem.chirp facts}.
We already know that $\calD$ is a difference set for $\calV$
(and so $\calD^\rmc$ is as well),
and that the synthesis operator $\bfGamma_0$ of the resulting harmonic ETF
(the $(\calD\times\calV)$-indexed submatrix of $\bfGamma$) satisfies $\bfGamma_0^*\bfGamma_0^{}=2^{M-1}[2^M\bfI+\sgn(\rmQ)\bfC]$.
To show that $\calD$ and $\calD^\rmc$ are paired in the sense of Definition~\ref{def.paired difference sets},
we want to show that $\calD^\rmc$-indexed columns of $\bfGamma_0$ form a tight frame for their span, or equivalently, that the $(\calD^\rmc\times\calD^\rmc)$-indexed submatrix of $\bfGamma_0^*\bfGamma_0^{}=2^{M-1}[2^M\bfI+\sgn(\rmQ)\bfC]$ is a scalar multiple of a projection.
Here since $\frac12(\bfI-\bfDelta)$ is the diagonal $\set{0,1}$-valued matrix whose diagonal entries indicate $\calD^\rmc$,
this equates to showing that
\begin{equation*}
\bfG
:=\tfrac12(\bfI-\bfDelta)[2^M\bfI+\sgn(\rmQ)\bfC]\tfrac12(\bfI-\bfDelta)
\end{equation*}
satisfies $\bfG^2=A\bfG$ for some $A>0$.
To simplify this expression for $\bfG$,
note that since $\bfDelta\bfC\bfDelta=\bfGamma$ where $\bfDelta^2=\bfI$
(and so $[\frac12(\bfI-\bfDelta)]^2=\frac12(\bfI-\bfDelta)$ and $(\bfI-\bfDelta)\bfDelta=-(\bfI-\bfDelta)$),
\begin{equation}
\label{eq.pf of infinite family 1}
\bfG
=\tfrac12(\bfI-\bfDelta)[2^M\bfI+\sgn(\rmQ)\bfDelta\bfGamma\bfDelta]\tfrac12(\bfI-\bfDelta)
=2^M\tfrac{1}{2}(\bfI-\bfDelta)+\sgn(\rmQ)\tfrac12(\bfI-\bfDelta)\bfGamma\tfrac12(\bfI-\bfDelta).
\end{equation}
Squaring this equation and again making use of the fact that
$[\frac12(\bfI-\bfDelta)]^2=\frac12(\bfI-\bfDelta)$ gives
\begin{equation}
\label{eq.pf of infinite family 2}
\bfG^2
=2^{2M}\tfrac12(\bfI-\bfDelta)+2^{M+1}\,\sgn(\rmQ)\tfrac12(\bfI-\bfDelta)\bfGamma\tfrac12(\bfI-\bfDelta)
+\tfrac12(\bfI-\bfDelta)\bfGamma\tfrac12(\bfI-\bfDelta)\bfGamma\tfrac12(\bfI-\bfDelta).
\end{equation}
To proceed, recall from~\eqref{eq.chirp Fourier} that
$(\bfGamma\bfDelta\bfGamma)(\bfDelta\bfGamma\bfDelta)
=(\bfGamma\bfDelta)^3=2^{3M}\sgn(\rmQ)\bfI$
where $\bfGamma^2=2^{2M}\bfI$ and $\bfDelta^2=\bfI$.
Thus, $\bfGamma\bfDelta\bfGamma=2^M\,\sgn(\rmQ)\bfDelta\bfGamma\bfDelta$ and so
\begin{equation*}
\bfGamma\tfrac12(\bfI-\bfDelta)\bfGamma
=2^{2M-1}\bfI-\tfrac12\bfGamma\bfDelta\bfGamma
=2^{2M-1}\bfI-2^{M-1}\,\sgn(\rmQ)\bfDelta\bfGamma\bfDelta.
\end{equation*}
Conjugating this equation by $\tfrac12(\bfI-\bfDelta)$ gives
\begin{equation*}
\tfrac12(\bfI-\bfDelta)\bfGamma\tfrac12(\bfI-\bfDelta)\bfGamma\tfrac12(\bfI-\bfDelta)
=2^{2M-1}\tfrac12(\bfI-\bfDelta)-2^{M-1}\,\sgn(\rmQ)\tfrac12(\bfI-\bfDelta)\bfGamma\tfrac12(\bfI-\bfDelta).
\end{equation*}
Substituting this into~\eqref{eq.pf of infinite family 2} and then recalling~\eqref{eq.pf of infinite family 1} gives that $\bfG^2=A\bfG$ for some $A>0$:
\begin{align*}
\bfG^2
&=2^{2M}\tfrac12(\bfI-\bfDelta)
+2^{M+1}\,\sgn(\rmQ)\tfrac12(\bfI-\bfDelta)\bfGamma\tfrac12(\bfI-\bfDelta)\\
&\qquad+2^{2M-1}\tfrac12(\bfI-\bfDelta)-2^{M-1}\,\sgn(\rmQ)\tfrac12(\bfI-\bfDelta)\bfGamma\tfrac12(\bfI-\bfDelta)\\
&=3(2^{M-1})[2^M\tfrac12(\bfI-\bfDelta)
+\sgn(\rmQ)\tfrac12(\bfI-\bfDelta)\bfGamma\tfrac12(\bfI-\bfDelta)]
=3(2^{M-1})\bfG.\qedhere
\end{align*}
\end{proof}

Applying Theorems~\ref{thm.ECTFF from PDS} and~\ref{thm.PDS from quadratic} to the canonical quadratic forms of Example~\ref{ex.quadratic forms} yields the following result:

\begin{theorem}
\label{thm.infinite family}
For any positive integer $M$,
let $\rmQ$ be either the hyperbolic~\eqref{eq.canonical hyperbolic} or elliptic~\eqref{eq.canonical elliptic} quadratic form over $\bbF_2^{2M}$ with associated symplectic form $\rmB$ of~\eqref{eq.canonical symplectic}
(which have $\sgn(\rmQ)=1$ and $\sgn(\rmQ)=-1$, respectively)
and let
$\calD=\set{\bfx\in\bbF_2^{2M}: \rmQ(\bfx)=0}$.
Then $\calD$ and $\calD^\rmc$ are paired difference sets for $\bbF_2^{2M}$
(Theorem~\ref{thm.PDS from quadratic}),
and applying Theorem~\ref{thm.ECTFF from PDS} to them gives that:
\begin{enumerate}
\renewcommand{\labelenumi}{(\alph{enumi})}
\item
$\set{\bfphi_\bfy}_{\bfy\in\bbF_2^M}\subseteq\bbR^\calD$, $\bfphi_{\bfy}(\bfx)=(-1)^{\rmB(\bfx,\bfy)}$ is an $\ETF(2^{M-1}[2^M+\sgn(\rmQ)],2^{2M})$ for $\bbR^\calD$;\smallskip
\item
for any $\bfy\in\bbF_2^{2M}$, $\set{\bfphi_{\bfy+\bfz}}_{\bfz\in\calD^\rmc}$ is an $\ETF(\frac13(2^{2M}-1),2^{M-1}[2^M-\sgn(\rmQ)])$ for its span $\calU_\bfy$;\smallskip
\item
$\set{\calU_\bfy}_{\bfy\in\bbF_2^{2M}}$ is an
$\ECTFF(2^{M-1}[2^M+\sgn(\rmQ)],2^{2M},\frac13(2^{2M}-1))$ for $\bbR^\calD$;\smallskip
\item
$\set{\bfpsi_\bfy}_{\bfy\in\bbF_2^M}\subseteq\bbR^{\calD^\rmc}$, $\bfpsi_{\bfy}(\bfx)=(-1)^{\rmB(\bfx,\bfy)}$ is an $\ETF(2^{M-1}[2^M-\sgn(\rmQ)],2^{2M})$ for $\bbR^{\calD^\rmc}$;\smallskip
\item
for any $\bfy\in\bbF_2^{2M}$, $\set{\bfpsi_{\bfy+\bfz}}_{\bfz\in\calD}$ is an $\ETF(\frac13(2^{2M}-1),2^{M-1}[2^M+\sgn(\rmQ)])$ for its span $\calV_\bfy$;\smallskip
\item
$\set{\calV_\bfy}_{\bfy\in\bbF_2^{2M}}$ is an
$\ECTFF(2^{M-1}[2^M-\sgn(\rmQ)],2^{2M},\frac13(2^{2M}-1))$ for $\bbR^{\calD^\rmc}$.
\end{enumerate}
\end{theorem}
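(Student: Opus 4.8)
The plan is to derive Theorem~\ref{thm.infinite family} as a direct corollary of Theorems~\ref{thm.ECTFF from PDS} and~\ref{thm.PDS from quadratic}, the only real work being the bookkeeping of parameters. First I would invoke Example~\ref{ex.quadratic forms}: the hyperbolic form~\eqref{eq.canonical hyperbolic} and the elliptic form~\eqref{eq.canonical elliptic} are nondefective quadratic forms on $\bbF_2^{2M}$ whose associated symplectic form~\eqref{eq.polarization} is~\eqref{eq.canonical symplectic}, and they have $\sgn(\rmQ)=1$ and $\sgn(\rmQ)=-1$, respectively. Theorem~\ref{thm.PDS from quadratic} then gives immediately that $\calD=\set{\bfx\in\bbF_2^{2M}:\rmQ(\bfx)=0}$ is a difference set for $\bbF_2^{2M}$ paired with $\calD^\rmc$, once we identify $\bbF_2^{2M}$ with its Pontryagin dual via $\bfy\mapsto(\bfx\mapsto(-1)^{\rmB(\bfx,\bfy)})$. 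Under exactly this identification, the harmonic ETF of the difference set $\calD$ is the frame $\set{\bfphi_\bfy}_\bfy$ of item~(a), while that of $\calD^\rmc$ is the frame $\set{\bfpsi_\bfy}_\bfy$ of item~(d); by Lemma~\ref{lem.chirp facts} the common length of these is $N:=\#(\bbF_2^{2M})=2^{2M}$ and their ambient dimensions are $D:=\#(\calD)=2^{M-1}[2^M+\sgn(\rmQ)]$ and $E:=\#(\calD^\rmc)=N-D=2^{M-1}[2^M-\sgn(\rmQ)]$. This establishes~(a) and~(d).

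Next I would compute the common rank~\eqref{eq.paired diff set rank}. Since $\calD$ and $\calD^\rmc$ partition $\bbF_2^{2M}$ we have $D+E=N$, while $DE=2^{2M-2}(2^{2M}-\sgn(\rmQ)^2)=2^{2M-2}(2^{2M}-1)$ in both cases; hence the denominator of~\eqref{eq.paired diff set rank} is $(D+E-1)N-DE=(2^{2M}-1)2^{2M}-2^{2M-2}(2^{2M}-1)=3\cdot2^{2M-2}(2^{2M}-1)$, and therefore
\begin{equation*}
R=\frac{DE(N-1)}{(D+E-1)N-DE}=\frac{2^{2M-2}(2^{2M}-1)^2}{3\cdot2^{2M-2}(2^{2M}-1)}=\tfrac{1}{3}(2^{2M}-1),
\end{equation*}
which is an integer since $2^{2M}\equiv1\pmod3$.

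With these parameters recorded, items~(b), (c), (e) and~(f) are read off from Theorem~\ref{thm.ECTFF from PDS}. Applying that theorem to the paired pair $(\calD,\calD^\rmc)$: for each character $\gamma$ of $\bbF_2^{2M}$, i.e.\ each $\bfy\in\bbF_2^{2M}$, and each $\varepsilon\in\calD^\rmc$ corresponding to $\bfz\in\calD^\rmc$, the product character $\gamma\varepsilon$ corresponds to $\bfy+\bfz$, so that $\bfphi_{\gamma\varepsilon}=\bfphi_{\bfy+\bfz}$ and $\calU_\gamma=\calU_\bfy$; the theorem thus says $\set{\bfphi_{\bfy+\bfz}}_{\bfz\in\calD^\rmc}$ is an $\ETF(R,E)$ for $\calU_\bfy$ (item~(b)) and that $\set{\calU_\bfy}_{\bfy\in\bbF_2^{2M}}$ is an $\ECTFF(D,N,R)$ for $\bbC^\calD$ (item~(c)). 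Moreover this Hilbert space may be taken to be $\bbR^\calD$, since the character table has all entries $\pm1$, so the ECTFF is real. Theorem~\ref{thm.ECTFF from PDS} also asserts that pairing is symmetric, so $(\calD^\rmc,\calD)$ is itself a paired pair; running the same theorem on it --- with $D$ and $E$ interchanged but $R$ unchanged, as~\eqref{eq.paired diff set rank} is symmetric in $D$ and $E$ --- yields the $\ETF(R,D)$ of item~(e) and the real $\ECTFF(E,N,R)$ of item~(f) for $\bbR^{\calD^\rmc}$.

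I do not expect a genuine obstacle, since the substance is already contained in Theorems~\ref{thm.ECTFF from PDS} and~\ref{thm.PDS from quadratic}; the only things to watch are the arithmetic in the rank computation and keeping the hyperbolic and elliptic cases aligned, which is why I would carry the symbol $\sgn(\rmQ)\in\set{1,-1}$ through every cardinality rather than splitting into cases. One minor point worth a sentence: when $M=1$ one of $D,E$ equals $1$, so the pair is ``trivial'' in the sense discussed after Theorem~\ref{thm.ECTFF from PDS}, yet the stated conclusions still hold verbatim (with $R=1$).
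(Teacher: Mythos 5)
Your proposal is correct and follows essentially the same route as the paper: cite Theorem~\ref{thm.PDS from quadratic} for the paired difference sets, apply Theorem~\ref{thm.ECTFF from PDS} (in both orders, using the symmetry of pairing), and compute $R=\frac13(2^{2M}-1)$ from~\eqref{eq.paired diff set rank} using $D+E=N$ and $DE=2^{2M-2}(N-1)$, exactly as the paper does. The added remarks on realness and the degenerate $M=1$ case are consistent with the paper's own discussion.
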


\begin{proof}
Most of these results are immediate consequences of Theorems~\ref{thm.ECTFF from PDS} and~\ref{thm.PDS from quadratic}.
To find the dimension $R$ of the spans of the sub-ETFs in (b) and (e) we use~\eqref{eq.paired diff set rank} where
$N=\#(\calV)=2^{2M}$,
$D=\#(\calD)=2^{M-1}[2^M+\sgn(\rmQ)]$
and
$E=\#(\calD^\rmc)=2^{2M}-2^{M-1}[2^M+\sgn(\rmQ)]=2^{M-1}[2^M-\sgn(\rmQ)]$.
Here, since $DE=2^{2M-2}(N-1)$ and $D+E=N$,
\begin{equation*}
R
=\tfrac{DE(N-1)}{(D+E-1)N-DE}
=\tfrac{2^{2M-2}(N-1)^2}{(N-1)N-2^{2M-2}(N-1)}
=\tfrac{2^{2M-2}(N-1)}{N-2^{2M-2}}
=\tfrac{2^{2M-2}(2^{2M}-1)}{2^{2M}-2^{2M-2}}
=\tfrac13(2^{2M}-1).
\end{equation*}
Another peculiarity of these examples is that since $\bfGamma$ is symmetric and $\calD$ and $\calD^\rmc$ are complementary,
it is valid to construct the ECTFFs (c) and (f) that arise from Theorem~\ref{thm.ECTFF from PDS} in this setting from sub-ETFs (b) and (e) of Naimark complementary ETFs (a) and (d), respectively.
\end{proof}

As discussed in Example~\ref{ex.ECTFF}, when $M=2$ this yields both an $\ETF(6,16)$ that contains $16$ (distinct, but overlapping and unitarily equivalent) sub-$\ETF(5,10)$ whose spans form an $\ECTFF(6,16,5)$,
as well as an $\ETF(10,16)$ that contains $16$ sub-$\ETF(5,6)$ whose spans form an $\ECTFF(10,16,5)$.
When instead $M=3$, it yields an $\ETF(28,64)$ that contains $64$ sub-$\ETF(21,36)$ whose spans form an $\ECTFF(28,64,21)$ as well as an $\ETF(36,64)$ that contains $64$ sub-$\ETF(21,28)$ whose spans form an $\ECTFF(36,64,21)$.
These alone account for a remarkable proportion of real ETFs with small parameters~\cite{FickusM16}.
(When $M=1$, Theorem~\ref{thm.infinite family} becomes trivial,
yielding an $\ETF(1,4)$ that contains $4$ sub-$\ETF(1,3)$ whose spans form an $\ECTFF(1,4,1)$ and an $\ETF(3,4)$ that contains $4$ sub-$\ETF(1,1)$ whose spans form an $\ECTFF(3,4,1)$.)

With the exception of this $\ECTFF(6,16,5)$ (which as already noted is the spatial complement of an $\ETF(6,16)$), the ECTFFs produced by Theorem~\ref{thm.infinite family} when $M\geq 2$ seem to be new:
we could not find any other way to construct (real or complex) ECTFFs with these parameters from any of the methods mentioned in the introduction.
These ECTFFs cannot be EITFFs: as noted in the previous section, this is actually true of any ECTFFs that arise from nontrivial paired difference sets since the resulting subspaces intersect nontrivially;
here, this also follows from the fact that $2R>D$.
A more interesting question is whether the spatial complements of these ECTFFs are EITFFs.
When $M=2$, the spatial complement of the $\ECTFF(6,16,5)$ certainly is an EITFF, while that of the $\ECTFF(10,16,5)$ certainly is not (since having $D=2R$ implies that its principal angles do not change under spatial complements).
Our preliminary numerical experimentation indicates that when $M\geq 3$ the spatial complements of the ECTFFs of Theorem~\ref{thm.infinite family} are not EITFFs in general.
When $M\geq 3$ an ECTFF of (c) or (f) has $D<2R$ and so any pair of its subspaces have at least
$2R-D
=\frac23(2^M\pm1)(2^{M-2}\mp 1)$ principal angles of $0$;
only when every pair of its subspaces has exactly $2R-D$ principal angles of $0$ and $D-R$ principal angles of some other constant value will its spatial complement be an EITFF.

\begin{remark}
Every ETF constructed in Theorem~\ref{thm.infinite family} equates to a known type of strongly regular graph (SRG)~\cite{Brouwer07,Brouwer17}.
A graph on a $V$-element vertex set $\calV$ is \textit{strongly regular} with parameters $(V,K,\Lambda,U)$ if its adjacency matrix $\bfA$ satisfies
$\bfA^2=(\Lambda-U)\bfA+(K-U)\bfI+U\bfJ$.
These parameters are dependent:
since $\bfA\bfone=K\bfone$, applying $\bfA^2$ to $\bfone$ gives $K^2=(\Lambda-U)K+(K-U)+UV$, namely that $U(V-K-1)=K(K-\Lambda-1)$.
In general, there are two notions of equivalence between certain real ETFs and certain SRGs.
By negating some vectors if necessary,
every real $N$-vector ETF is \textit{projectively equivalent} to one $\set{\bfphi_n}_{n\in\calN}$ for which there exists $n_0\in\calN$ such that $\ip{\bfphi_{n_0}}{\bfphi_{n}}>0$ for all $n$.
Such an ETF equates to an SRG on the vertex set $\calN\backslash\set{n_0}$ with $K=2U$~\cite{HolmesP04,Waldron09}.
Here, two vertices $n_1,n_2\in\calN\backslash\set{n_0}$ are adjacent when $\ip{\bfphi_{n_1}}{\bfphi_{n_2}}>0$~\cite{FickusJMPW18} and
\begin{equation}
\label{eq.traditional ETF SRG equivalence}
V=N-1,
\quad
K=\tfrac N2-1-(\tfrac N{2D}-1)\bigbracket{\tfrac{D(N-1)}{N-D}}^{\frac12},
\quad
U=\tfrac K2.
\end{equation}
(In~\cite{Waldron09},
adjacency instead equates to having $\ip{\bfphi_{n_1}}{\bfphi_{n_2}}<0$,
yielding the complementary graph.)
Sometimes a real $N$-vector ETF $\set{\bfphi_n}_{n\in\calN}$ for some Hilbert space $\bbH$ instead has the all-ones vector $\bfone$ as an eigenvector of its Gram matrix.
This can occur in two distinct ways: either the ETF is \textit{centered}, having $\bfone\in\ker(\bfPhi)=\ker(\bfPhi^*\bfPhi)$ and so $\sum_{n\in\calN}\bfphi_n=\bfPhi\bfone=\bfzero$,
or is \textit{axial}, having $\bfone\in\bfPhi^*\bfPhi(\bbR^\calN)=\bfPhi^*(\bbH)$,
meaning all of its vectors make the same angle with their nonzero centroid.
An axial real $\ETF(D,N)$ $\set{\bfphi_n}_{n\in\calN}$ equates~\cite{FickusJMPW18} to an SRG on the vertex set $\calN$ with $V=4K-2\Lambda-2U$ and $V-2K-1<0$.
Here, two vertices $n_1,n_2\in\calN$ are adjacent if and only if $\ip{\bfphi_{n_1}}{\bfphi_{n_2}}>0$ and
\begin{equation}
\label{eq.axial ETF SRG equivalence}
V=N,
\quad
K=\tfrac{N-1}{2}+\tfrac12(\tfrac{N}{D}-1)\bigbracket{\tfrac{D(N-1)}{N-D}}^{\frac12},
\quad
U=\tfrac K2\tfrac{V-2K-2}{V-2K-1}.
\end{equation}
(An analogous characterization of centered real ETFs is also known but is superfluous since an ETF is axial if and only if its Naimark complement is centered~\cite{FickusJMPW18}.)
While every real ETF arises from the equivalence of~\eqref{eq.traditional ETF SRG equivalence}, it is an open problem if the same holds for~\eqref{eq.axial ETF SRG equivalence}: we do not know if every real ETF with $V$ vectors is projectively equivalent to one whose signature matrix matches the Seidel adjacency matrix of a $(V,K,\Lambda,U)$-SRG with $V=4K-2\Lambda-2U$~\cite{FickusJMPW18}.

With respect to the ETFs of Theorem~\ref{thm.infinite family},
note that since the zero vector in $\bbF_2^{2M}$ is singular,
the synthesis operator $\bfGamma_0$ of the harmonic ETF of (a) includes the all-ones ($0$-indexed) row of $\bfGamma$.
See~\eqref{eq.16 x 16 Gamma} for when $M=2$ and $\sgn(\rmQ)=-1$, for example.
As such, this ETF is axial.
Applying~\eqref{eq.axial ETF SRG equivalence} to it yields an SRG with $V=2^{2M}$, $K=\tfrac12[2^M-\sgn(\rmQ)][2^M+\sgn(\rmQ)+1]$ and
$U=2^{M-1}(2^{M-1}+1)$ in which,
since $\bfGamma_0^*\bfGamma_0^{}=2^{M-1}[2^M\bfI+\sgn(\rmQ)\bfC]$,
adjacency depends on the value of  $\rmC(\bfy_1,\bfy_2)=\rmc(\bfy_1+\bfy_2)=(-1)^{\rmQ(\bfy_1+\bfy_2)}$.
This is a known~\textit{affine polar graph} ``$VO^{\pm}_{2M}(2)$"~\cite{Brouwer07}.
Its Naimark complement (the ETF of (d)) equates to the (graph) complement of this SRG.
Since $\bfC=\bfDelta\bfGamma\bfDelta$ the ETF of (a) is moreover projectively equivalent to one with Gram matrix $2^{M-1}[2^M\bfI+\sgn(\rmQ)\bfGamma]$.
Since the entries in the $0$th row and column of $\bfGamma$ are constant,
we can apply~\eqref{eq.axial ETF SRG equivalence} to this ETF to obtain a subordinate SRG on the $V=2^{2M}-1$ vertices of $\bbF_2^{2M}\backslash\set{0}$ in which adjacency depends on the value of $\bfGamma(\bfy_1,\bfy_2)=(-1)^{\rmB(\bfy_1,\bfy_2)}$.
This is a known \textit{symplectic graph} ``$Sp_{2M}(2)$"~\cite{Brouwer07}.

Every ETF of (b) is a sub-ETF of the axial ETF (a) and so is also axial:
if the analysis operator of a given sequence of vectors contains an all-ones vector, then the same is true for any of its subsequences.
The row space of~\eqref{eq.ETF(5,10)}, for example, clearly contains the all-ones vector.
Applying~\eqref{eq.axial ETF SRG equivalence} to it yields an SRG with  $V=2^{M-1}[2^M-\sgn(\rmQ)]$, $K=\frac12(2^{M-1}+1)[2^M-1-\sgn(\rmQ)]$ and
$U=2^{M-3}[2^M+3-\sgn(\rmQ)]$ on the nonsingular points of $\bbF_2^{2M}$ in which adjacency depends on the value of $\rmQ(\bfy_1+\bfy_2)=\rmB(\bfy_1,\bfy_2)$.
This is a known ``$NO^{\pm}_{2M}(2)$" SRG~\cite{Brouwer07}.
We caution that a sub-ETF of a centered ETF is not necessarily centered.
In particular, the Gram matrix of an ETF of (e) is the $(\calD\times\calD)$-indexed submatrix of
$2^{M-1}[2^M\bfI-\sgn(\rmQ)\bfGamma]$ and so is neither axial nor centered, having a ($0$-indexed) row and column with entries of constant value.
Applying~\eqref{eq.traditional ETF SRG equivalence} to it yields an SRG on the $V=2^{M-1}[2^M-\sgn(\rmQ)]-1$ nonzero singular points of $\bbF_2^M$ in which adjacency depends on the value of $\rmQ(\bfy_1+\bfy_2)=\rmB(\bfy_1,\bfy_2)$.
This is a known ``$O_{2M}^{\pm}(2)$" SRG.
In fact, a careful analysis reveals that an ETF of (e) is projectively equivalent to one of (b) with opposite sign (since the quadrics of~\eqref{eq.canonical hyperbolic} and~\eqref{eq.canonical elliptic} are shifts of each other),
meaning that ``$O_{2M}^{\pm}(2)$" is subordinate to $NO_{2M}^{\mp}(2)$.

Real ETFs with the same (or Naimark complementary) parameters as those of (a) and (c) arise from McFarland difference sets~\cite{DingF07} and Steiner ETFs~\cite{GoethalsS70,FickusMT12} from $\BIBD(2^M,2,1)$.
Real ETFs with the same (or Naimark complementary) parameters as those of (b) and (e) arise from Steiner and Tremain~\cite{FickusJMP18} ETFs from $\BIBD(2^M-1,3,1)$.
Whether or not such ETFs are truly equivalent (up to unitary transformations on their spans and signed permutations of their vectors) is a question we leave for future research.
\end{remark}

\section{Conclusions and future work}

We have seen that paired difference sets (Definition~\ref{def.paired difference sets}) yield ECTFFs (Theorem~\ref{thm.ECTFF from PDS}) and that an infinite family of nontrivial such pairs exists (Theorem~\ref{thm.infinite family}).
As noted in~\cite{FickusMJ16}, at least one other nontrivial pair exists.
Like that of Example~\ref{ex.PDS} (the $M=2$ case of Theorem~\ref{thm.infinite family}) it consists of a $6$- and $10$-element subset of a group of order $16$.
But unlike that example, the group in question is $\bbZ_4^2$, not $\bbZ_2^4$.
(Interestingly, paired difference sets in $\bbZ_2^2\times\bbZ_4$ and $\bbZ_2\times\bbZ_8$ do not seem to exist, despite the fact that they too contain difference sets of order $6$ and $10$~\cite{FickusMJ16}.)

Nontrivial paired difference sets seem rare, in general:
our numerical search found only $27$ integer triples $(D,E,N)$ that meet even the simplest conditions on the existence of nontrivial paired difference sets of cardinality $D$ and $E$ (ordered without loss of generality according to size) in some abelian group of order $N$ which is at most $1024$,
namely that $1<D\leq E<N\leq 1024$ and that $\frac{D(D-1)}{N-1}$, $\frac{E(E-1)}{N}$ and $R$ of~\eqref{eq.paired diff set rank} are integers.
These include only four triples such that $D+E\neq N$.
Remarkably, these four triples along with seven others are ruled out by cross-referencing against a table of known difference sets~\cite{Gordon19} that makes use of more sophisticated known necessary conditions.
This itself raises an interesting open problem:
do the cardinalities of any nontrivial paired difference sets always sum to the cardinality of the corresponding group?
Of the $16$ triples that remain, four correspond to those produced by Theorem~\ref{thm.infinite family} when $M=2,3,4,5$,
namely those with $(R,D,E,N)$ parameters $(5,6,10,16)$ (Example~\ref{ex.PDS}),
$(21,28,36,64)$, $(85,120,136,256)$ and $(341,496,528,1024)$, respectively.
The remaining $12$ cases are open.
For five of these, the existence of even a difference set of cardinality $D$ for a group of order $N$ is unresolved, namely when $(D,N)$ is $(190,400)$, $(325,676)$, $(378,784)$, $(385,925)$ and $(280,931)$.
This leaves just seven open cases that should probably bear the most scrutiny.
They have $(R,D,E,N)$ parameters
$(11,12,33,45)$, $(19,20,76,96)$, $(29,30,145,175)$, $(105,126,225,351)$, $(55,56,385,441)$, $(71,72,568,640)$ and $(89,90,801,891)$.
Interestingly, like those of Theorem~\ref{thm.infinite family},  these parameters all match those arising from certain McFarland difference sets with one exception: $(105,126,225,351)$ is instead consistent with a certain \textit{Spence} difference set~\cite{JungnickelPS07}.
Of these, $(19,20,76,96)$, $(105,126,225,351)$ and $(71,72,568,640)$ seem the most promising since ETFs with the same parameters as those guaranteed by Theorem~\ref{thm.ECTFF from PDS} are already known to exist~\cite{FickusM16}.
It would be more surprising if $12$- and $33$-element paired difference sets for either of the two abelian groups of order $45$ existed since this would give an $\ETF(11,33)$, which would be the smallest new ETF discovered in years.
Our numerical work indicates that such paired difference sets do not exist.
This itself raises another open problem: do all paired difference sets consist of $2^{M-1}(2^M-1)$- and $2^{M-1}(2^M+1)$-element subsets of an abelian group of order $2^{2M}$?

\section*{Acknowledgments}
The authors thank Prof.~Dustin~G.~Mixon and the two anonymous reviewers for their thoughtful comments.
We are especially grateful for the anonymous remark that led to Theorem~\ref{thm.necessary}.
This work was partially supported by NSF DMS 1830066, and began during the Summer of Frame Theory (SOFT) 2016.
The views expressed in this article are those of the authors and do not reflect the official policy or position of the United States Air Force, Department of Defense, or the U.S.~Government.


\begin{thebibliography}{WW}

\bibitem{ApplebyBDF17}
M.~Appleby, I.~Bengtsson, I.~Dumitru, S.~Flammia,
Dimension towers of SICs.\ I.\ Aligned SICs and embedded tight frames,
J.\ Math.\ Phys.\  58 (2017) 112201.

\bibitem{BachocBC04}
C.~Bachoc, E.~Bannai, R.~Coulangeon,
Codes and designs in {G}rassmannian spaces,
Discrete Math.\ 277 (2004) 15--28.

\bibitem{BachocE13}
C.~Bachoc, M.~Ehler,
Tight $p$-fusion frames,
Appl.\ Comput.\ Harmon.\ Anal.\ 35 (2013) 1--15.

\bibitem{BajwaCM12}
W.~U.~Bajwa, R.~Calderbank, D.~G.~Mixon,
Two are better than one: fundamental parameters of frame coherence,
Appl.\ Comput.\ Harmon.\ Anal.\ 33 (2012) 58--78.

\bibitem{BandeiraFMW13}
A.~S.~Bandeira, M.~Fickus, D.~G.~Mixon, P.~Wong,
The road to deterministic matrices with the Restricted Isometry Property,
J.\ Fourier Anal.\ Appl.\ 19 (2013) 1123--1149.

\bibitem{BargGOY15}
A.~Barg, A.~Glazyrin, K.~A.~Okoudjou, W.-H.~Yu,
Finite two-distance tight frames,
Linear Algebra Appl.\ 475 (2015) 163--175.

\bibitem{BlokhuisBE18}
A.~Blokhuis, U.~Brehm, B.~Et-Taoui,
Complex conference matrices and equi-isoclinic planes in Euclidean spaces,
Beitr.\ Algebra Geom.\ 59 (2018) 491--500.

\bibitem{Bodmann07}
B.~G.~Bodmann, 
Optimal linear transmission by loss-insensitive packet encoding,
Appl.\ Comput.\ Harmon.\ Anal.\ 22 (2007) 274--285.

\bibitem{BodmannE10}
B.~G.~Bodmann, H.~J.~Elwood,
Complex equiangular Parseval frames and Seidel matrices containing $p$th roots of unity,
Proc.\ Amer.\ Math.\ Soc.\ 138 (2010) 4387--4404.

\bibitem{BodmannK20}
B.~Bodmann, E.~J.~King,
Optimal arrangements of classical and quantum states with limited purity,
to appear in J.\ Lond.\ Math.\ Soc.

\bibitem{Brouwer07}
A.~E.~Brouwer,
Strongly regular graphs,
in: C.~J.~Colbourn, J.~H.~Dinitz (Eds.), Handbook of Combinatorial Designs, Second Edition (2007) 852--868.

\bibitem{Brouwer17}
A.~E.~Brouwer,
Parameters of strongly regular graphs,
http://www.win.tue.nl/$\sim$aeb/graphs/srg/srgtab.html

\bibitem{CalderbankCKS97}
A.~R.~Calderbank, P.~J.~Cameron, W.~M.~Kantor, J.~J.~Seidel,
$\mathbb{Z}_4$-Kerdock codes, orthogonal spreads, and extremal Euclidean line-sets,
Proc.\ Lond.\ Math.\ Soc.\ 75 (1997) 436--480.

\bibitem{CalderbankHRSS99}
A.~R.~Calderbank, R.~H.~Hardin, E.~M.~Rains, P.~W.~Shor, N.~J.~A.~Sloane,
A group-theoretic framework for the construction of packings in {G}rassmannian spaces,
J.\ Algebraic Combin.\ 9 (1999) 129--140.

\bibitem{CalderbankTX15}
R.~Calderbank, A.~Thompson, Y.~Xie,
On block coherence of frames,
Appl.\ Comput.\ Harmon.\ Anal.\ 38 (2015) 50--71.

\bibitem{CameronS73}
P.~J.~Cameron, J.~J.~Seidel,
Quadratic forms over $GF(2)$,
Indag.\ Math.\ 76 (1973) 1--8.

\bibitem{CasazzaFMWZ11}
P.~G.~Casazza, M.~Fickus, D.~G.~Mixon, Y.~Wang, Z.~Zhou,
Constructing tight fusion frames,
Appl.\ Comput.\ Harmon.\ Anal.\ 30 (2011) 175--187.

\bibitem{CohnKM16}
H.~Cohn, A.~Kumar, G.~Minton,
Optimal simplices and codes in projective spaces,
Geom.\ Topol.\ 20 (2016) 1289--1357.

\bibitem{ConwayHS96}
J.~H.~Conway, R.~H.~Hardin, N.~J.~A.~Sloane,
Packing lines, planes, etc.: packings in Grassmannian spaces,
Exp.\ Math.\ 5 (1996) 139--159.

\bibitem{CoutinkhoGSZ16}
G.~Coutinho, C.~Godsil, H.~Shirazi, H.~Zhan,
Equiangular lines and covers of the complete graph,
Linear Algebra Appl.\ 488 (2016) 264--283.

\bibitem{Creignou08}
J. Creignou,
Constructions of {G}rassmannian simplices,
arXiv:cs/0703036 (2008).

\bibitem{DhillonHST08}
I.~S.~Dhillon, J.~R.~Heath, T.~Strohmer, J.~A.~Tropp,
Constructing packings in Grassmannian manifolds via alternating projection,
Exp.\ Math.\ 17 (2008) 9--35.

\bibitem{DingF07}
C.~Ding, T.~Feng,
A generic construction of complex codebooks meeting the Welch bound,
IEEE Trans.\ Inform.\ Theory 53 (2007) 4245--4250.

\bibitem{EldarKB10}
Y.~C.~Eldar, P.~Kuppinger, H.~Bölcskei,
Block-sparse signals: uncertainty relations and efficient recovery
IEEE Trans.\ Signal Process.\ 58 (2010) 3042--3054.

\bibitem{EtTaoui18}
B.~Et-Taoui,
Infinite family of equi-isoclinic planes in Euclidean odd dimensional spaces and of complex symmetric conference matrices of odd orders,
Linear Algebra Appl.\  556 (2018) 373--380.

\bibitem{EtTaoui20}
B.~Et-Taoui,
Quaternionic equiangular lines,
Adv.\ Geom.\ 20 (2020) 273--284.

\bibitem{Fickus09}
M.~Fickus,
Maximally equiangular frames and Gauss sums,
J.\ Fourier Anal.\ Appl.\ 15 (2009) 413--427.

\bibitem{FickusJKM18}
M.~Fickus, J.~Jasper, E.~J.~King, D.~G.~Mixon,
Equiangular tight frames that contain regular simplices,
Linear Algebra Appl.\ 555 (2018) 98--138.

\bibitem{FickusJMP18}
M.~Fickus, J.~Jasper, D.~G.~Mixon, J.~D.~Peterson,
Tremain equiangular tight frames,
J.\ Combin.\ Theory Ser.~A  153 (2018) 54--66.

\bibitem{FickusJMPW18}
M.~Fickus, J.~Jasper, D.~G.~Mixon, J.~D.~Peterson, C.~E.~Watson,
Equiangular tight frames with centroidal symmetry,
Appl.\ Comput.\ Harmon.\ Anal.\ 44 (2018) 476--496.

\bibitem{FickusJMPW19}
M.~Fickus, J.~Jasper, D.~G.~Mixon, J.~D.~Peterson, C.~E.~Watson,
Polyphase equiangular tight frames and abelian generalized quadrangles,
Appl.\ Comput.\ Harmon.\ Anal.\ 47 (2019) 628--661.

\bibitem{FickusMW21}
M.~Fickus, B.~R.~Mayo, C.~E.~Watson,
Certifying the novelty of equichordal tight fusion frames,
arXiv:2103.03192 (2021).

\bibitem{FickusM16}
M.~Fickus, D.~G.~Mixon, Tables of the existence of equiangular tight frames, arXiv:1504.00253 (2016).

\bibitem{FickusMJ16}
M.~Fickus, D.~G.~Mixon, J.~Jasper,
Equiangular tight frames from hyperovals,
IEEE Trans.\ Inform.\ Theory.\ 62 (2016) 5225--5236.

\bibitem{FickusMT12}
M.~Fickus, D.~G.~Mixon, J.~C.~Tremain,
Steiner equiangular tight frames,
Linear Algebra Appl.\ 436 (2012) 1014--1027.

\bibitem{FickusS20}
M.~Fickus, C.~A.~Schmitt,
Harmonic equiangular tight frames comprised of regular simplices,
Linear Algebra Appl.\ 586 (2020) 130--169.

\bibitem{FuchsHS17}
C.~A.~Fuchs, M.~C.~Hoang, B.~C.~Stacey,
The SIC question: history and state of play,
Axioms 6 (2017) 21:1--20.

\bibitem{GoethalsS70}
J.~M.~Goethals, J.~J.~Seidel,
Strongly regular graphs derived from combinatorial designs,
Can.\ J.\ Math.\ 22 (1970) 597--614.

\bibitem{Gordon19}
D.~Gordon,
Difference sets,
https://www.dmgordon.org/diffset/

\bibitem{Grove02}
L.~C.~Grove,
Classical groups and geometric algebra,
Grad.\ Stud.\ Math.\ 39, Amer. Math. Soc., 2002.

\bibitem{Hoggar77}
S.~G.~Hoggar,
New sets of equi-isoclinic $n$-planes from old,
Proc.\ Edinb.\ Math.\ Soc.\ 20 (1977) 287--291.

\bibitem{HolmesP04}
R.~B.~Holmes, V.~I.~Paulsen,
Optimal frames for erasures,
Linear Algebra Appl.\ 377 (2004) 31--51.

\bibitem{IversonJM16}
J.~W.~Iverson, J.~Jasper, D.~G.~Mixon,
Optimal line packings from finite group actions,
Forum Math.\ Sigma 8 (2020) 1--40.

\bibitem{IversonM20}
J.~W.~Iverson, D.~G.~Mixon,
Doubly transitive lines I: Higman pairs and roux,
arXiv:1806.09037.

\bibitem{JasperMF14}
J.~Jasper, D.~G.~Mixon, M.~Fickus,
Kirkman equiangular tight frames and codes,
IEEE Trans.\ Inform.\ Theory.\ 60 (2014) 170--181.

\bibitem{JungnickelPS07}
D.~Jungnickel, A.~Pott, K.~W.~Smith,
Difference sets,
in: C.~J.~Colbourn, J.~H.~Dinitz (Eds.), CRC Handbook of Combinatorial Designs (2007) 419--435.

\bibitem{King16}
E.~J.~King,
New constructions and characterizations of flat and almost flat {G}rassmannian fusion frames,
arXiv:1612.05784 (2016).

\bibitem{King21}
E.~J.~King,
Creating subspace packings from other subspace packings,
Linear Algebra Appl.\ 625 (2021) 68--80.

\bibitem{KocakN17}
T.~Koc\'{a}k, M.~Niepel,
Families of optimal packings in real and complex {G}rassmannian spaces,
J.\ Algebraic Combin.\ 45 (2017) 129--148.

\bibitem{Konig99}
H.~K\"{o}nig,
Cubature formulas on spheres,
Math.\ Res.\ 107 (1999) 201--212.

\bibitem{KutyniokPCL09}
G.~Kutyniok, A.~Pezeshki, R.~Calderbank, T.~Liu,
Robust dimension reduction, fusion frames, and Grassmannian packings,
Appl.\ Comput.\ Harmon.\ Anal.\ 26 (2009) 64--76.

\bibitem{LemmensS73b}
P.~W.~H.~Lemmens, J.~J.~Seidel,
Equi-isoclinic subspaces of Euclidean spaces,
Indag.\ Math.\ 76 (1973) 98--107.

\bibitem{RenesBSC04}
J.~M.~Renes, R. Blume-Kohout, A.~J.~Scott, C.~M.~Caves,
Symmetric informationally complete quantum measurements,
J.\ Math.\ Phys.\ 45 (2004) 2171--2180.

\bibitem{StrohmerH03}
T.~Strohmer, R.~W.~Heath,
Grassmannian frames with applications to coding and communication,
Appl.\ Comput.\ Harmon.\ Anal.\ 14 (2003) 257--275.

\bibitem{Taylor92}
D.~E.~Taylor,
The geometry of the classical groups,
Sigma Series in Pure Mathematics 9, Heldermann Verlag, 1992.

\bibitem{Turyn65}
R.~J.~Turyn,
Character sums and difference sets,
Pacific J.\ Math.\ 15 (1965) 319--346.

\bibitem{Waldron09}
S.~Waldron,
On the construction of equiangular frames from graphs,
Linear Algebra Appl.\ 431 (2009) 2228--2242.

\bibitem{Waldron20}
S.~Waldron,
Tight frames over the quaternions and equiangular lines,
arXiv:2006.06126.

\bibitem{Welch74}
L.~R.~Welch,
Lower bounds on the maximum cross correlation of signals,
IEEE Trans.\ Inform.\ Theory 20 (1974) 397--399.

\bibitem{XiaZG05}
P.~Xia, S.~Zhou, G.~B.~Giannakis,
Achieving the Welch bound with difference sets,
IEEE Trans.\ Inform.\ Theory 51 (2005) 1900--1907.

\bibitem{Zauner99}
G.~Zauner,
Quantum designs: Foundations of a noncommutative design theory,
Ph.D.\ Thesis, University of Vienna, 1999.

\bibitem{ZhangG18}
T.~Zhang, G.~Ge,
Combinatorial constructions of packings in {G}rassmannian spaces,
Des.\ Codes Cryptogr.\ 86 (2018) 803--815.

\end{thebibliography}
\end{document}